\documentclass[11pt]{amsart}
\usepackage[utf8]{inputenc}
\usepackage[T1]{fontenc}
\usepackage[english]{babel}
\usepackage{amsbsy,amssymb,url}
\usepackage{enumitem}

\usepackage{xcolor}
\usepackage{amsmath}
\usepackage{mathrsfs}
\usepackage{mathtools}
\usepackage{amsfonts,amssymb,latexsym,MnSymbol,dsfont}
\usepackage{geometry}
 \geometry{ a4paper,  left=22mm, right=22mm, top=28mm, bottom=28mm }

\newtheorem{thm}{Theorem}

\newtheorem{lem}[thm]{Lemma}

\theoremstyle{definition}

\theoremstyle{remark}
\newtheorem{rem}[thm]{Remark}

\numberwithin{equation}{section}
\numberwithin{thm}{section}


\newcommand{\R}{\mathbb{R}}

\newcommand{\EE}{\mathcal{E}}
\newcommand{\RR}{\mathcal{R}}
\newcommand{\NN}{\mathcal{N}}
\providecommand{\abs}[1]{\left\lvert#1\right\rvert}                             
\providecommand{\norm}[1]{\left\lVert#1\right\rVert}    
\newcommand{\scal}[2]{\left\langle #1,#2\right\rangle}
\newcommand{\vm}{v_\mathrm{m}}

\newcommand{\myx}{\mathcal{X}}
\newcommand{\lsh}{\ell_\mathrm{sh}}
\newcommand{\RRsh}{\RR_\mathrm{sh}}
\newcommand{\Csh}{C_\mathrm{sh}}
\newcommand{\actcon}{\mathcal{I}}
\newcommand*{\dd}{\mathop{}\!\mathrm{d}}
\DeclareMathOperator{\inter}{int}
\usepackage{xcolor}
\usepackage{pgf,tikz}
\usetikzlibrary{arrows}
\usetikzlibrary{decorations.markings,decorations.pathmorphing,patterns,positioning}
\usetikzlibrary{tikzmark,fit,shapes.geometric}

\begin{document}

\title[Optimality conditions for crawlers]
{On the optimal control of rate-independent soft crawlers}

\author[Giovanni Colombo]{Giovanni Colombo}
\address[Giovanni Colombo]{Universit\`a\ di Padova, Dipartimento di Matematica ``Tullio Levi-Civita'', 
via Trieste 63, 35121 Padova, Italy}
\email{colombo@math.unipd.it}

\author[Paolo Gidoni]{Paolo Gidoni}
\address[Paolo Gidoni]{Czech Academy of Sciences, Institute of Information Theory and Automation (UTIA), 
pod vodárenskou veží 4, 182 08, Prague 8, Czech Republic}
\email{gidoni@utia.cas.cz}

\thanks{ The research was partially carried out while P.G. was a postdoc at the I.N.d.A.M. Unit of the University of Padova, with a fellowship of the Istituto Nazionale di Alta Matematica in the framework of the MathTech project. 
G.C. is partially supported by the Padua University grant SID 2018 ``Controllability, stabilizability and infimun gaps for control systems'', BIRD 187147, and is affiliated to Istituto Nazionale di Alta Matematica (GNAMPA). 
P.G. is partially supported by the GA\v{C}-FWF project  19-29646L}

\keywords{optimal control, sweeping process, discrete approximations, necessary optimality conditions, soft robotics, crawling locomotion}

\subjclass[2010]{49K21, 70Q05, 	49J21, 70E60}

\date{\today}
\begin{abstract}
	Existence of optimal solutions and necessary optimality conditions for a controlled version of Moreau's 
	sweeping process are derived. 
	The control is a measurable ingredient of the dynamics and the constraint set is a polyhedron. The novelty consists
	in considering time periodic trajectories, adding the requirement that the control have zero average, 
	 and considering an integral functional that lacks weak semicontinuity. A model coming from
	the locomotion of a soft-robotic crawler, that motivated our setting, is analysed in detail. In obtaining necessary conditions, an improvement of
	the method of discrete approximations is used.
\end{abstract}
\maketitle

\section{Introduction}
Moreau's sweeping process comprises a class of evolution inclusions that model the displacement of a point $x(t)$  dragged in a normal direction
by a moving (convex or mildly non-convex) closed set: see, e.g., the survey paper \cite{CT} and references therein. 
If the point is also subject to an independent dynamics, then the evolution can be seen as a constrained motion, in which the reaction of the constraint
is active. More precisely, the problem is stated as
\begin{equation}\label{gendyn}
\dot{x}(t)\in - \NN_{C(t)}(x(t)) + g(t,x(t))\quad \text{a.e. in $[0,T]$},\quad x(0)=x_0\in C(0). 
\end{equation}
Here $\NN_C(x)$ denotes the normal cone (of convex analysis if $C$ is convex) to $C$ at $x\in C$.
The case where $C$ is independent of time is particularly meaningful, because it is well known that the problem is equivalent to the so called
projected differential equation
\begin{equation}\label{projdyn}
\dot{x}(t) = \pi_{T_C(x)}(g(t,x(t))), \quad x(0)=x_0\in C, 
\end{equation}
where $\pi_{T_C(x)}(y)$ denotes the projection into the tangent cone to $C$ at $x$ of the vector $y$ (see \cite[Sec.~10.1]{AU}).
The equivalence of \eqref{gendyn} and \eqref{projdyn}, when $C(t)\equiv C$, both explains the role of the constraint in the dynamics and 
its intrinsic nonsmoothness (even discontinuity). Indeed, only one normal vector can be taken in \eqref{gendyn}, 
that is the smallest that cancels the (external) normal component of $f$, in order to keep the trajectory inside $C$. This latter fact follows
from the emptiness of the normal, or tangent, cone to $C$ at points outside $C$.
Moreover, observe that the normal cone mapping $x\mapsto \NN_C(x)$ is discontinuous -- actually it has only closed graph -- for two reasons: first because 
$C$ may be nonsmooth, and, second, because in the interior of $C$, if any, $\NN_C(x)=\{ 0\}$, while at boundary points $\NN_C(x)$ contains at least
a half line. A similar type of discontinuity appears in the right hand side of \eqref{projdyn}. However, the monotone character of the normal
cone mapping allows to prove forward-in-time existence (and uniqueness if the ODE $\dot{x}=g(t,x)$ allows so) of solutions to the Cauchy
problem \eqref{gendyn} under usual conditions imposed on $g$.

The simplest control problems involving Moreau's sweeping process occur when a control parameter $u(\cdot)$ appears within $g$: the dynamics then
becomes
\begin{equation}\label{contrsw}
\dot{x}(t)\in - \NN_{C(t)}(x(t)) + g(t,x(t),u(t)), \; u(t)\in U \text{ a.e.,}
\end{equation}
$U$ being an assigned compact set. This paper is devoted to deriving necessary optimality conditions for a particular Bolza problem involving
\eqref{contrsw} together with further requirements on both $x$ and $u$. This problem is motivated by maximizing the displacement 
in the locomotion of a bio-mimetic soft robotic
crawler, whose mathematical model is presented in detail in Section \ref{sec:model}. The robot can be described as a chain of  
$N$ links, each formed by a spring coupled in series with an actuator, whose length is controlled. 
The movement is one dimensional and the evolution is supposed
to be \textit{quasi static}, i.e., the mechanical system is modelled by a force balance law and therefore obeys a first order differential inclusion.
 After quite a few transformations, that are essentially known in the theory of 
\textit{rate independent evolutions}, one arrives to the controlled dynamics \eqref{contrsw}, where the space dimension of the problem 
is the number of links. Since one wants to find an optimal \emph{gait}, namely a periodic actuation to be repeated an arbitrarily 
large number of times, the fixed initial condition on the trajectory is substituted by a $T$-periodicity condition, $T$ being fixed \textit{a priori}. 
Moreover, in the final model the controls turn out to be the derivative of periodic Lipschitz functions, so that the zero mean condition
\begin{equation}\label{zeromean}
\int_0^T u(t)\dd t =0
\end{equation}
must be imposed on feasible controls. Finally, $C$ turns out to be a polyhedron.
The functional to be maximized is an integral functional $J$ involving two terms, the reaction of the constraint
and the cost of the control (that of course appears with a minus sign):
\begin{equation*}
J(x,u):= \int_0^T \Big(f_1\big(g(t,x(t),u(t))-\dot{x}(t)\big) - f_2(t,u(t))\Big)\dd t.
\end{equation*}
Here $f_1(\cdot)$ is Lipschitz and positively homogeneous with degree one and $f_2(\cdot,\cdot)$, for simplicity, is $\mathcal{C}^1$ with respect to $u$.
In our application, the first summand  in the integrand of $J$ is a function of the reaction of the constraint, measuring the
the displacement of the barycentre of the system of springs, while $f_2$ represents the cost of actuating the control.
If, on one hand, it is natural to assume the convexity of $f_2$, on the other hand the derivation of the explicit form of $f_1$ for our model, presented in Section \ref{sec:compf1}, gives a functional that is not  concave down with respect to
$\dot{x}$ and $u$. For instance, in the simple example presented in Section \ref{sec:crawler}, the first summand of the integrand is $\frac{1}{2}|\dot{x}-u|$,
so the integral functional is not weakly upper semicontinuous in $W^{1,2}([0,T];\mathbb{R}^n)$, cf.~also Remark \ref{rem:vm}. Therefore, the direct method cannot be used in order
to ensure the existence of optimal state-trajectory pairs.

The first contribution of the present paper is proving an existence result for
the maximization of $J$ along trajectories of a controlled sweeping process of the type \eqref{contrsw} by imposing a uniform bound on the
total variation of admissible controls, giving pointwise convergence of a maximizing sequence of controls. 
This is a strong assumption, which however seems to be justified by the observation
that optimal controls are expected to be bang-bang with finitely many switchings (see Section \ref{sec:crawler}), or anyway with finite total
variation (see Section \ref{rem:ex}). Moreover, this requirement does not completely trivialize the existence argument, because in order to allow
passing to the limit on $J$ along a maximizing sequence $(x_\ell,u_\ell)$ one needs also the strong convergence of the sequence of derivatives $\dot{x}_\ell$
of the state variable. While for general differential inclusions this is not possible, the particular structure of the sweeping process allows to
overcome this difficulty.

Our second contribution consists of necessary optimality conditions. The analysis of necessary conditions 
in this type of setting does not follow from the classical literature on state constrained
optimal control problems (see, e.g., \cite{vinter}), since the right hand side of the dynamics is not Lipschitz (actually, is very far from being so), 
with respect to the state $x$. There are essentially two ways to attack the problem. The first one is based on a regularization of the dynamics and goes
back essentially to \cite{BK}, see also \cite{arc, DPFS}. It provides an adjoint equation in the sense of measures together with
a maximum principle of Pontryagin type, as it may be expected in such problems, but is -- up to now -- limited by requiring the set $C$
to be smooth. The second one, that is due to Mordukhovich and collaborators
(see, e.g., \cite{CHM, gmd} and references therein), is based on discrete approximations. 
This technique fits perfectly with our polyhedral setting, but provides only
a weaker form of the maximum principle. In this paper we adapt to our problem the method of discrete approximations, by considering periodic trajectories
and adding the control constraint \eqref{zeromean}. Moreover, taking inspiration from the fact that the normal vector in \eqref{contrsw} cannot
be chosen independently of the control, we simplify the discretization procedure by avoiding computing the normal cone to the normal cone $\NN_C(x)$.
Furthermore, our approximation technique allows general measurable controls, not being limited to controls with bounded variation as 
in \cite{gmd}. Finally, we deal with the nonconcavity of the integral functional without relying at all on relaxation arguments. Actually,
in this case relaxation results are difficult to obtain, since the integral functional involves also the derivative of the state, not only the control
variable, and furthermore periodic solutions are considered. Nevertheless, the obtained necessary conditions are very similar to those derived in the framework of \cite{gmd}.

The problem and the main results of the paper are stated in Section \ref{sec:statth}. The existence proof is presented in 
Section \ref{sec:proofex}, while the proof of the theorem on necessary conditions appears in Sections \ref{sec:discappr}, 
\ref{sec:discopt}, \ref{sec:discnec}, and \ref{sec:main-proof}. The intermediate Section \ref{sec:model} contains the general 
derivation of the model, while in Section \ref{sec:crawler} we discuss extensively the necessary condition obtained in Theorem 
\ref{nec} in the case of a one-link crawler, and make a few technical remarks and comments.

\section{Statement of the problem and main results}\label{sec:statth}

\subsection{Notation}
Let $A$ and $S$ be sets, with $A\subset S$. We set, for $x\in S$,
\begin{equation*}
\mathds{1}_A(x)=\begin{cases}
1&\text{if } x\in A\\
0&\text{if } x\not\in A.
\end{cases}
\end{equation*}
The Lebesgue measure of $S\subset \mathbb{R}$ is denoted by $|S|$. Given an integrable function $f$ on a set $S$ with 
finite measure, we denote its average as
\begin{equation*}
\strokedint_S f(s)\dd s=\frac{1}{\abs{S}}\int_S f(s)\dd s
\end{equation*}
The closed unit ball of a normed space $X$ is $B_X$  and 
the interior of a set $S\subset X$ is denoted by $\mathrm{int}\, S$. The 
convergence with respect to the Hausdorff distance between closed subsets of $X$ will be considered (see \cite[Section 4.C]{RW}).
We denote with $\mathcal{C}([0,T],X)$ the space of continuous functions from $[0,T]$ to $X$, endowed with the $\norm{\cdot}_\infty$ norm; 
with $\mathcal{C}^\ast([0,T],X)$ its dual space, and with $\mathcal{C}^\ast_+([0,T],X)$ the 
subset of positive measures. 

Classical constructs of nonsmooth analysis will be used. In particular, for a set $S$, the cone of (limiting/Mordukhovich) normal vectors
to $S$ at $x\in S$ is denoted by $\NN_S(x)$ (see \cite[Definition 6.3]{RW}), while for $x\notin S$, we set $\NN_S(x)=\emptyset$. For a Lipschitz
function $f\colon X \to \R$, the (limiting/Mordukhovich) subdifferential of $x$ is denoted by $\partial f(x)$ 
(see \cite[Definition 8.6 (b)]{RW}); we also refer the interested reader to \cite[Chapter 1]{BM}, where the above concepts 
are used also in the context of \textit{coderivatives} of set-valued maps.

By a process, or a state-control pair, for the controlled dynamics \eqref{contrsw} we mean the couple $(x,u)$, where $x$ is a solution of
\eqref{contrsw} corresponding to the (measurable) control $u$. The total variation of a function $u$ of one real variable is denoted by
$TV(u)$.

\subsection{Statement of the problem} \label{sec:statement} Let $C$ be a given polyhedron in a Euclidean space $X=\R^n$, defined as
\begin{equation}\label{eq:polyhedron}
C=\bigcap_{j=1}^\sigma C_j, 
\end{equation}
where, for suitable unit vectors $x_\ast^j\in X$ and real numbers $c_j$,
\begin{equation*}
C_j := \{ x\in X : \langle x_\ast^j,x\rangle \le c_j\}.
\end{equation*}
Given $x\in C$, let us denote with $\actcon (x)$ the set of active constraints in $x$, namely 
$\actcon (x)=\{j=1,\dots \sigma : \langle x_\ast^j,x\rangle = c_j\}$.
We assume that $C$ has non-empty interior; in other words, the Positive Linear Independence Constraint Qualification 
(PLICQ) holds, i.e., if $\sum_{j\in\actcon(x)} \lambda_j x_\ast^j =0$
and $\lambda_j\ge 0$, $j=1,\ldots ,\sigma$, then $\lambda_j=0$ for all $j$. In this case, the normal cone to $C$ at $x\in C$
is
\[
\NN_C(x) = \left\{ v: v=\sum_{j\in \actcon (x)} \lambda_j x_\ast^j,\,\lambda_j\ge 0\right\}.  
\]
 The following assumptions will be considered.
\begin{itemize}
\item[($H_U$)] The control set $U\subset \R^d$ is a compact and convex set, and that $d\le \mathrm{dim}\, X$. 
Moreover, since we will require a zero-average condition on $u(t)$, we assume $0\in \inter U$.
\end{itemize}
We remark that in order to guarantee the existence of controls $u(t)$ with zero-average it is sufficient to assume $0\in U$, but if the zero lies on the boundary then all the zero-average functions $u(t)$ have values in a lower dimensional convex set $\tilde U$ with  $0\in \inter \tilde U$.

We consider the maps  $g\colon[0,T]\times X\times U\to X$, $f_1\colon X\to \R$ and $f_2\colon [0,T]\times U\rightarrow \R$ with the following properties.
\begin{itemize}
\item[($H_g$)] the map $t\mapsto g(t,x,u)$ is measurable for all $x\in X, u\in U$  and there exists
$L\ge 0$ such that $|g(t,x,u)|\le L$ for a.e.~$t\in [0,T]$ and all $(x,u)\in X\times U$, $(x,u)\to g(t,x,u)$ is smooth
 and there exists $L'\ge 0$ such that $|D_x g(t,x,u)|\le L'$ for a.e.~$t\in [0,T]$ and all $(x,u)\in X\times U$; 
\item[($H_{f_1}$)] the map $x\mapsto f_1(x)$ is Lipschitz continuous;
\item[($H_{f_2}$)] the map $t\mapsto f_2 (t,u)$ is continuous for all $u\in U$ and the map $u\mapsto f_2(t,u)$ is continuously differentiable
 for a.e.~$t\in [0,T]$ and all $u\in U$;

\end{itemize}

\medskip
\noindent\textbf{Problem \textup{(P)}} \emph{We set  $T>0$ and consider the problem
\begin{equation}\label{traj}
\begin{cases}
\dot{x}\in -\NN_{C}(x) + g(t,x,u)\;\text{ a.e.~on~}\; [0,T],\\
u(t)\in U\; \text{a.e. and }\int_0^T u(t) \dd t =0,\\
x(0)=x(T).
\end{cases} 
\end{equation}
We wish to maximize the integral functional
\begin{equation}\label{eq:cost}
J(x,u):=\int_0^T \Big(f_1\left(g(t,x(t),u(t))-\dot{x}(t)\big)- f_2(t,u(t))\right) \dd t 
\end{equation}
among all processes $(x,u)$ of \eqref{traj}.}
\medbreak

\subsection{Statement of the main results}\label{sec:mainres}
The setting of the existence theorem is slightly more general that in the previous section. Therefore we list the assumptions
directly in the statement of the result. We will make reference to problem \eqref{traj}, but the statement can be easily reformulated
for a Cauchy problem, with or without the constraint on the mean of the control.
\begin{thm}\label{th:ex}
Let $C\subset X$ be compact and convex, let $U\subset \mathbb{R}^d$ be compact, and let $K>0$. Let $g\colon [0,T]\times X\times U\to X$
be measurable with respect to $t$, continuous with respect to $(x,u)$ and uniformly bounded. Let $f\colon [0,T]\times X^2\times U\to X$
be measurable with respect to $t$ and upper semicontinuous with respect to $(x,\dot{x},u)$. Set 
\[
\mathscr{U}_K:=\{ u\in L^1(0,T;\mathbb{R}^d): u(t)\in U\text{ a.e. and } TV(u)\le K\}                                                                                             
\]
and assume that the problem \eqref{traj} admits solutions with $u\in\mathscr{U}_K$.
Then the integral functional
\[
\mathscr{I} (x,u):= \int_0^T f(t,x(t),\dot{x}(t),u(t))\dd t
\]
admits a maximizer among all processes $(x,u)$ of problem \eqref{traj} such that $u\in\mathscr{U}_K$.
\end{thm}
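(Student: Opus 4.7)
The proof is a direct method adapted to the fact that $\mathscr{I}$ is only upper semicontinuous in $\dot{x}$: the uniform bound on $TV(u_\ell)$ combined with the monotone structure of the sweeping process will deliver enough compactness to push the argument through.

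First, I would fix a maximizing sequence $(x_\ell, u_\ell)$ of admissible processes with $u_\ell \in \mathscr{U}_K$. Since $U$ is compact and $TV(u_\ell) \le K$, Helly's selection theorem yields a subsequence (not relabeled) and a function $u \colon [0,T] \to U$ with $TV(u) \le K$ such that $u_\ell(t) \to u(t)$ for every $t$ outside a countable set. By dominated convergence, the constraint $\int_0^T u(t)\,\dd t = 0$ passes to the limit, so $u \in \mathscr{U}_K$.

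Second, I would produce the limit state. Because $|g|\le L$, the projected-dynamics formulation \eqref{projdyn} gives $|\dot{x}_\ell|\le L$ uniformly, hence $\{x_\ell\}$ is uniformly Lipschitz with values in the compact set $C$. Arzel\`a--Ascoli extracts a uniformly convergent subsequence $x_\ell \to x$; Banach--Alaoglu then gives $\dot{x}_\ell \rightharpoonup^\ast \dot{x}$ in $L^\infty$, and $T$-periodicity passes to the uniform limit. To show that $(x,u)$ solves the sweeping process, I would set $y_\ell(t) := g(t,x_\ell(t),u_\ell(t)) - \dot{x}_\ell(t) \in \NN_C(x_\ell(t))$, note that $y_\ell$ is bounded so $y_\ell \rightharpoonup^\ast y$ up to a subsequence, and exploit the variational characterization $\langle y_\ell(t), z - x_\ell(t)\rangle \le 0$ for all $z\in C$: multiplying by a positive test function and letting $\ell\to\infty$, the uniform convergence of $x_\ell$ and the weak-$\ast$ convergence of $y_\ell$ yield $\langle y(t), z - x(t)\rangle \le 0$ a.e., i.e.\ $y(t)\in \NN_C(x(t))$. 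Continuity of $g$ in $(x,u)$ and dominated convergence give $g(\cdot, x_\ell, u_\ell) \to g(\cdot, x, u)$ in $L^p$ for every finite $p$, so $\dot{x} = g(\cdot, x, u) - y$ and $(x,u)$ is admissible.

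Third, and this is the part I expect to be the main obstacle, I would upgrade the weak convergence of $\dot{x}_\ell$ to a pointwise-a.e.\ (equivalently $L^p$) convergence: weak limits are not enough, because $f$ is only upper semicontinuous in $(x,\dot{x},u)$ and a reverse Fatou requires pointwise control. The key is the monotonicity of the normal-cone map for a convex set: subtracting the inclusions for $x_\ell$ and $x$ gives
\[
\langle \dot{x}_\ell - \dot{x},\, x_\ell - x\rangle \le \langle g_\ell - g,\, x_\ell - x\rangle,
\]
where $g_\ell := g(\cdot, x_\ell, u_\ell)$, and both sides are uniformly integrable. Combining this with the projection identity $\dot{x}_\ell = \pi_{T_C(x_\ell)}(g_\ell)$, the a.e.\ convergences $x_\ell\to x$ and $g_\ell\to g$, and the outer semicontinuity of the tangent cone, I would conclude $\dot{x}_\ell(t)\to \dot{x}(t)$ for a.e.\ $t$. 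With this in hand, the reverse Fatou lemma, justified by the uniform $L^\infty$ bound on the integrand and the upper semicontinuity of $f$, yields
\[
\limsup_{\ell\to\infty}\mathscr{I}(x_\ell, u_\ell) \le \mathscr{I}(x,u),
\]
so $(x,u)$ is the sought maximizer. The delicate point is this strong convergence of $\dot{x}_\ell$: since for a general compact convex $C$ the tangent cone is only outer semicontinuous in $x$, I expect to rely on the sweeping-process structure itself---for instance through a Moreau-type energy identity or a catching-up argument---rather than on direct continuity of the projection.
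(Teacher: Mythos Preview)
Your overall architecture matches the paper's: Helly for the controls, compactness for the states, and then an upgrade from weak to strong convergence of $\dot{x}_\ell$ so that a reverse Fatou can be applied. The paper isolates that upgrade as a separate lemma (Lemma~\ref{strong}) and then the proof of Theorem~\ref{th:ex} is a three-line application of it.

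The part of your plan that does not yet work is Step~3. The monotonicity inequality you write down integrates to a Gronwall-type estimate for $|x_\ell-x|$, which only recovers uniform convergence of the states---something you already had from Arzel\`a--Ascoli---and says nothing about $\dot{x}_\ell$. The projection route $\dot{x}_\ell=\pi_{T_C(x_\ell)}(g_\ell)$ combined with outer semicontinuity of $x\mapsto T_C(x)$ is, as you suspect, not enough: the metric projection onto $T_C(x)$ is genuinely discontinuous in $x$ at boundary points (think of a corner of a polytope, or even a smooth boundary point approached from the interior), so pointwise convergence of $(x_\ell,g_\ell)$ does not force pointwise convergence of $\pi_{T_C(x_\ell)}(g_\ell)$. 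You therefore need an argument that uses the \emph{trajectory} structure, not the pointwise one.

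The paper's device is exactly the ``Moreau-type energy identity'' you allude to, and it is short enough to be worth recording. With $\xi_\ell:=g_\ell-\dot{x}_\ell\in\NN_C(x_\ell)$, the normality of $\xi_\ell(t)$ and the fact that $x_\ell(t\pm h)\in C$ give, after passing $h\to 0^+$,
\[
\langle \xi_\ell(t),\dot{x}_\ell(t)\rangle=0\quad\text{a.e.,}
\]
and the same holds for the limit pair $(\xi,\dot{x})$. Since $\dot{x}_\ell+\xi_\ell=g_\ell\to g=\dot{x}+\xi$ strongly in $L^2$, the orthogonality turns this into convergence of norms:
\[
\|\dot{x}_\ell-\xi_\ell\|_{L^2}^2=\|\dot{x}_\ell\|_{L^2}^2+\|\xi_\ell\|_{L^2}^2=\|\dot{x}_\ell+\xi_\ell\|_{L^2}^2\longrightarrow\|\dot{x}+\xi\|_{L^2}^2=\|\dot{x}-\xi\|_{L^2}^2,
\]
which together with weak convergence forces $\dot{x}_\ell-\xi_\ell\to\dot{x}-\xi$ strongly, hence $\dot{x}_\ell\to\dot{x}$ strongly in $L^2$. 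This replaces your projection/tangent-cone argument cleanly and closes the gap.
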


Our necessary optimality conditions are applicable to local $W^{1,2}$-optimal processes for problem (P). 
We say that $(\bar{x},\bar{u})$ is a local $W^{1,2}$-optimal process for (P) provided there exists 
$\bar{\varepsilon}>0$ such that for all processes $(x,u)$ of \eqref{traj} with $\| x-\bar{x}\|_{W^{1,2}([0,T];X)}+\|u-\bar{u}\|_{L^2([0,T];X)}
<\bar{\varepsilon}$ one has $J(x,u)\le J(\bar{x},\bar{u})$.

\medbreak
The result on necessary optimality conditions is the following
\begin{thm}\label{nec}
	Let the assumptions ($H_U$), ($H_g$), ($H_{f_1}$), ($H_{f_2}$) hold, and let
	$(\bar{x},\bar{u})$ be a local $W^{1,2}$-optimal process for the problem \textup{(P)}. Then there exist
	\begin{itemize}
		\item a number $\lambda \ge 0$,
		\item a function of bounded variation $p\colon[0,T]\to X$,
		\item positive and finite Radon measures $\dd\xi^j$ on $[0,T]$, $j=1,\ldots ,\sigma$,
		\item a function $\psi \in L^1 (0,T; X)$,
		\item a vector $\omega \in B_X$
	\end{itemize}
	that satisfy the following properties:
	\begin{itemize}
		\item (adjoint equation)\quad $\dd p = -D_x g(t,\bar{x}(t),\bar{u}(t))^\ast \dd t+
		\sum_{j=1}^\sigma \dd \xi^j x_\ast^j \quad\text{(in $\mathcal{C}^\ast ([0,T];X)$)}$,
		\item (transversality)\quad $p(T)=p(0)$,
		\item (weak maximality condition)\quad $\psi(t) =
		-D_w g(t,\bar{x}(t),\bar{u}(t))^\ast  p(t) -\omega -\lambda  D_w f_2(t,\bar{u}(t))\in \NN_U (\bar{u}(t))$ a.e.~on $[0,T]$,
		\item (support condition)\quad for all $j=1,\ldots ,\sigma$, 
		$\,\mathrm{supp}\, (\dd \xi^j) \subset \{ t\in [0,T]: j\in\actcon(\bar{x}(t))\}$,
		\item (nontriviality condition)\quad $\lambda + \| p \|_\infty =1$.
	\end{itemize}
\end{thm}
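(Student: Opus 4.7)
The plan is to follow the method of discrete approximations of Mordukhovich, adapted to the polyhedral sweeping process, to the periodicity condition $x(0)=x(T)$, and to the zero-mean constraint on $u$. The starting point is a perturbation of the cost so that $(\bar x,\bar u)$ becomes a \emph{strict} $W^{1,2}\times L^2$-local maximizer of an auxiliary problem: one replaces $J$ by something like $J_\varepsilon(x,u):=J(x,u)-\|x-\bar x\|_{W^{1,2}}^2-\|u-\bar u\|_{L^2}^2$, restricted to a closed $W^{1,2}\times L^2$ ball around $(\bar x,\bar u)$. This standard trick ensures that when the continuous problem is approximated by finite-dimensional ones, the discrete optima cannot run away from $(\bar x,\bar u)$.

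Next, I would set up discrete approximations on the uniform grid $t_k=kT/N$, replacing the sweeping inclusion by a catching-up scheme of the form
\begin{equation*}
x_{k+1}\in x_k + hg(t_k,x_k,u_k) - \NN_{C}(x_{k+1}),\qquad h=T/N,
\end{equation*}
together with $x_N=x_0$ and $h\sum_{k=0}^{N-1}u_k=0$ (possibly relaxed within a tolerance that vanishes as $N\to\infty$). Because $C$ is polyhedral and PLICQ holds, the one-step implicit scheme is well posed and Lipschitz in the data, and the cost is discretized as a Riemann sum. The existence theorem (Theorem~\ref{th:ex}), applied to suitable piecewise constant and piecewise affine interpolants and combined with a direct-method argument on the finite-dimensional feasible sets, yields optimal discrete processes $(\bar x^N,\bar u^N)$, and the perturbation term forces them to converge strongly to $(\bar x,\bar u)$ in $W^{1,2}\times L^2$.

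The core step is then deriving necessary conditions for each finite-dimensional discrete problem. The idea, signalled in the introduction, is to keep the polyhedral representation $C=\bigcap_j C_j$ explicit: rather than computing coderivatives of the normal cone map $x\mapsto \NN_C(x)$, one rewrites the implicit step via the linear inequalities $\langle x_\ast^j,x_{k+1}\rangle\le c_j$ together with non-negative multipliers $\xi_{j,k}^N$ and the complementarity $\xi_{j,k}^N(\langle x_\ast^j,x_{k+1}\rangle-c_j)=0$. Applying the Mordukhovich Lagrange multiplier rule to this constrained optimization problem yields, for each $N$, a scalar $\lambda^N\ge 0$ attached to the perturbed cost, discrete adjoint variables $p_k^N$, multipliers $\xi_{j,k}^N\ge 0$ supported on active indices, and a vector $\omega^N\in\R^d$ coming from the discrete zero-mean constraint, related by a discrete adjoint recursion and a discrete maximality condition in $u_k$. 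The periodicity $x_N=x_0$ translates directly into a discrete transversality $p_0^N=p_N^N$.

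The hard part is the passage to the limit $N\to\infty$. After normalizing so that $\lambda^N+\|p^N\|_\infty=1$, I need uniform bounds on the multiplier measures $\dd\xi^{j,N}:=\sum_k \xi_{j,k}^N\delta_{t_k}$; this is where the polyhedral PLICQ assumption enters decisively, because it gives a quantitative bound of the form $\sum_j \xi_{j,k}^N \le c\,|p_{k+1}^N - p_k^N - \text{(smooth term)}|$ that allows one to control the total variation of $p^N$ by the total mass of the measures and vice versa. With these bounds, Helly's selection theorem extracts a BV limit $p$, and weak-$\ast$ compactness in $\mathcal{C}^\ast([0,T];X)$ yields limits $\dd\xi^j$, a limit multiplier $\omega\in B_X$ for the mean constraint, and $\lambda\ge 0$. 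Summation by parts in the discrete adjoint gives the adjoint equation in $\mathcal{C}^\ast$; the periodic transversality $p(T)=p(0)$ survives since the discrete one does; the support condition follows from upper semicontinuity of the active-index correspondence $x\mapsto\actcon(x)$ combined with the uniform convergence $\bar x^N\to\bar x$; and the weak maximality is obtained by taking the limit of the discrete Fermat-type inclusion in $\bar u^N$, recognizing $\psi(t)\in\NN_U(\bar u(t))$ through the Convergence Theorem for measurable selections. The most delicate point I anticipate is preserving nontriviality $\lambda+\|p\|_\infty=1$ in the limit, which hinges precisely on the uniform polyhedral bound on the multiplier measures described above.
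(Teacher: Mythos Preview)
Your overall strategy---discrete approximations, a penalty forcing the discrete optima back to $(\bar x,\bar u)$, finite-dimensional multiplier rule, passage to the limit---matches the paper. But your discretization is genuinely different from what the paper actually does, and the difference is precisely the ``simplification'' flagged in the introduction. The paper does \emph{not} use a catching-up scheme with $\NN_C(x_{k+1})$ in the discrete dynamics. Instead, in the discrete constraint $(m_2)$ the normal-cone element is \emph{frozen} to be the average of the one from the reference trajectory, namely $\strokedint_{I_m^i}\big(g(s,\bar x(s),\bar u(s))-\dot{\bar x}(s)\big)\dd s$, plus a slack $\rho_m^i\in B_X$ with a vanishing coefficient. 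This turns the discrete dynamics into the graph of a \emph{smooth} single-valued map $\Gamma_m^i$, and the state constraint is imposed separately as linear inequalities $(m_1)$ on slightly perturbed polyhedra $C_m^i$ (with $c_m^{ij}\to c^j$). The payoff is that the Lagrangian computation in Theorems~\ref{Theorem6.1}--\ref{Theorem6.2} involves only the normal cone to the graph of a smooth map, never the coderivative of $x\mapsto\NN_C(x)$. Your proposal to rewrite the catching-up step via KKT with explicit $\xi_{j,k}$ and complementarity $\xi_{j,k}(\langle x_\ast^j,x_{k+1}\rangle-c_j)=0$ does not really avoid that computation: a program with complementarity constraints is exactly where the normal cone to $\mathrm{gph}\,\NN_C$ shows up. Your route is closer to \cite{CHM,gmd} and can be made to work, but it is not the paper's route; the paper's freezing trick also explains why general measurable controls suffice (Theorem~\ref{Theorem1} plus Lemma~\ref{Lemma1}) without the bounded-variation assumption needed in \cite{gmd}, and why strong $W^{1,2}$ convergence of the discrete approximation to $\bar x$ is immediate by construction rather than something you must extract from catching-up.

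There is also a real gap in your nontriviality argument. You say it ``hinges precisely on the uniform polyhedral bound on the multiplier measures,'' but that is only half the story. The paper normalizes \emph{all} multipliers, including $|\omega_m^\ast|$ and $\|\psi_m\|_{L^1}$, in \eqref{norm}; after passing to the limit one must still exclude the degenerate scenario $\lambda=0$, $p\equiv 0$, $\dd\xi^j=0$, $\psi\equiv-\omega$ with $\omega\neq 0$. This is where the hypothesis $0\in\inter U$ together with the zero-mean constraint is used: if $-\omega\in\NN_U(\bar u(t))$ with $\omega\neq 0$ for a.e.\ $t$, then $\langle\omega,\bar u(t)\rangle<0$ a.e., which after integration contradicts $\int_0^T\bar u=0$. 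Without this step the limit could collapse onto the mean-constraint multiplier alone, and your normalization $\lambda^N+\|p^N\|_\infty=1$ gives no direct control on $\omega^N$.
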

The proof of Theorem \ref{nec} will be carried out in Sections \ref{sec:discappr} -- \ref{sec:main-proof}.

\begin{rem}
	One can consider a dynamics more general than \eqref{traj}, namely
	\begin{equation*}
	\begin{cases}
	\dot{x}(t)\in -\NN_{C}(x(t)) + g(t,x(t),u(t))\;\text{ a.e.~on~}\; [0,T],\\
	\dot{y}(t) = f_1(g(t,x(t),u(t))-\dot{x}(t)) - f_2(t,y(t),u(t)),\\
	u(t)\in U\; \text{ and }\;\int_0^T u(t) \dd t =0,\\
	x(0)=x(T), \quad y(0)=0
	\end{cases} 
	\end{equation*}
 The object to be maximized in this case is 
	\begin{equation*}
	\varphi(y(T)). 
	\end{equation*}
	for a suitable (e.g., u.s.c.) function $\varphi$.
	This amounts to adding only technical difficulties, that we wish to avoid here.
\end{rem}

\section{A motivating locomotor model}\label{sec:model}
\subsection{Introduction}
In the last years, an increasing attention has been directed to the analysis, control and optimization of the locomotion 
of simple devices, such as a chains of linked segments or blocks. The same trend can be observed both in swimming 
\cite{BMZ,MdS,ZC} or in locomotion on a solid surface, such as inching and crawling \cite{Ago,BPZZ,Gid18,WL}.
The employment of very simple mechanisms has two main motivations. The first one is that 
a simple mechanism allows an easier miniaturization of the device. 
The second advantage comes from the paradigm of \emph{simplexity} in soft robotics, based on the idea that a 
simple mechanism with a low number of control parameters may still achieve a complex behaviour and adaptability 
to an unknown environment by exploiting the large deformation of a soft, elastic body \cite{LM}. 
This also motivates the strong role played by elasticity in our model, despite introducing several additional mathematical challenges.

In the specific case of crawling locomotion, several approaches have been applied to the search of optimal gaits. 
One strategy is to consider suitable approximations in the model, for instance neglecting elasticity or working on a 
small deformation regime, so that, with a certain degree of approximation, it is possible to have an explicit description 
of the dynamics in terms of the control function \cite{Ago,DesTat}.
Another approach, introducing a feedback mechanism in order to apply adaptive control, is presented in \cite{Beh1,Beh2}. 
A model-free control framework, based on the decomposition of possible gaits as paths between a finite number of basic states, 
has been proposed in \cite{vikas}.

In this paper we present a more mathematical approach, based on a maximum principle of Pontryagin type. 
On one hand this, compared to the more pragmatical approaches mentioned above, makes more difficult to obtain 
an explicit characterization of optimal gaits. On the other hand, we believe that the development of a more 
theoretical approach, in parallel to engineering studies appearing in literature, may contribute to a better 
understanding of the challenging issues raised by the  optimal control of a soft bodied locomotor.  
In our opinion crawling locomotion is  not only an interesting problem \emph{per se}, but represent a less hostile 
framework in which we can learn to unravel difficult phenomena that appear in a more general setting.

Concerning the specific model considered in our paper, we will follow the approach developed in \cite{Gid18,GdS1}. 
Our choice is motivated by the fact that such class of model includes the two main features observed in crawlers 
(a stick-slip interaction with the environment and an elastic body) without adding unnecessary elements. 
Moreover, even if here we consider only a smaller family of cases, the same formalism of sweeping process applies 
to a large class of behaviour, including continuous bodies and time-dependent friction \cite{Gid18}, 
opening the way for future developments of our results.

\begin{figure}
	\begin{center}
		\begin{tikzpicture}[line cap=round,line join=round,>=triangle 45,x=1.0cm,y=1.0cm, line width=1.1pt,scale=1.2]
	\clip(0.,-0.5) rectangle (11.,2.);
	\draw[line width=0.8pt] (1.,1.)-- (1.6,1.);
	\draw[line width=0.8pt] (1.9,1.)-- (2.5,1.);
	\draw[line width=0.8pt] (1.6,0.9)-- (1.6,1.1);
	\draw[line width=0.8pt] (1.9,0.95)-- (1.9,1.05);
	\draw[line width=0.8pt] (1.65,0.95)-- (1.9,0.95);
	\draw[line width=0.8pt] (1.65,1.05)-- (1.9,1.05);
	\draw[line width=0.8pt] (1.6,1)-- (1.85,1);
	\draw[line width=0.8pt] (1.6,1.1)-- (1.85,1.1);
	\draw[line width=0.8pt] (1.6,0.9)-- (1.85,0.9);
	\draw[line width=0.8pt] (4.,1.)-- (4.6,1.);
	\draw[line width=0.8pt] (4.9,1.)-- (5.5,1.);
	\draw[line width=0.8pt] (4.6,0.9)-- (4.6,1.1);
	\draw[line width=0.8pt] (4.9,0.95)-- (4.9,1.05);
	\draw[line width=0.8pt] (4.65,0.95)-- (4.9,0.95);
	\draw[line width=0.8pt] (4.65,1.05)-- (4.9,1.05);
	\draw[line width=0.8pt] (4.6,1)-- (4.85,1);
	\draw[line width=0.8pt] (4.6,1.1)-- (4.85,1.1);
	\draw[line width=0.8pt] (4.6,0.9)-- (4.85,0.9);
		\draw[line width=0.8pt] (7.,1.)-- (7.6,1.);
	\draw[line width=0.8pt] (7.9,1.)-- (8.5,1.);
	\draw[line width=0.8pt] (7.6,0.9)-- (7.6,1.1);
	\draw[line width=0.8pt] (7.9,0.95)-- (7.9,1.05);
	\draw[line width=0.8pt] (7.65,0.95)-- (7.9,0.95);
	\draw[line width=0.8pt] (7.65,1.05)-- (7.9,1.05);
	\draw[line width=0.8pt] (7.6,1)-- (7.85,1);
	\draw[line width=0.8pt] (7.6,1.1)-- (7.85,1.1);
	\draw[line width=0.8pt] (7.6,0.9)-- (7.85,0.9);
	
	\draw[line width=0.8pt] (1,1.45)-- (2.5,1.45);
	\draw[line width=0.8pt] (1,1.4)-- (1,1.5);
	\draw[line width=0.8pt] (2.5,1.4)-- (2.5,1.5);
	\draw[line width=0.8pt] (4,1.45)-- (5.5,1.45);
	\draw[line width=0.8pt] (4,1.4)-- (4,1.5);
	\draw[line width=0.8pt] (5.5,1.4)-- (5.5,1.5);
		\draw[line width=0.8pt] (7,1.45)-- (8.5,1.45);
	\draw[line width=0.8pt] (7,1.4)-- (7,1.5);
	\draw[line width=0.8pt] (8.5,1.4)-- (8.5,1.5);
	\draw (1.75,1.45) node[anchor=south] {$L_1 (t)$};
	\draw (4.75,1.45) node[anchor=south] {$L_2 (t)$};
	\draw (7.75,1.45) node[anchor=south] {$L_3 (t)$};
	\draw (3.25,1.1) node[anchor=south] {$k$};
	\draw (6.25,1.1) node[anchor=south] {$k$};
	\draw (9.25,1.1) node[anchor=south] {$k$};
	\draw (1.,1.)-- (1.,0.) node[anchor=north]{$\myx_1(t)$};
	\draw[decoration={aspect=0.5, segment length=2mm, amplitude=2mm,coil},decorate] (2.5,1.)-- (4.,1.);
	\draw (4.,1.)-- (4.,0.)node[anchor=north]{$\myx_2(t)$};
	\draw [decoration={aspect=0.5, segment length=2mm, amplitude=2mm,coil},decorate](5.5,1.)-- (7.,1.);
	\draw (7.,1.)-- (7.,0.)node[anchor=north]{$\myx_3(t)$};
		\draw [decoration={aspect=0.5, segment length=2mm, amplitude=2mm,coil},decorate](8.5,1.)-- (10.,1.);
	\draw (10.,1.)-- (10.,0.)node[anchor=north]{$\myx_4(t)$};
	\draw (0.5,0.)-- (10.5,0.);
	\draw [fill=black] (1.,1.) circle (1pt);
	\draw [fill=black,line width=0.6pt] (1.,0.) --  (1,0.15) -- (1.1,0.15) --(1,0);
	\draw [fill=black,line width=0.6pt] (4.,0.) --  (4,0.15) -- (4.1,0.15) --(4,0);
	\draw [fill=black,line width=0.6pt] (7.,0.) --  (7,0.15) -- (7.1,0.15) --(7,0);
	\draw [fill=black,line width=0.6pt] (10.,0.) --  (10,0.15) -- (10.1,0.15) --(10,0);
	\draw [fill=black] (10.,1.) circle (1pt);
	\draw [fill=black] (7.,1.) circle (1pt);
	\draw [fill=black] (4.,1.) circle (1pt);
	\fill [pattern = north east lines] (0.5,0) rectangle (10.5,-0.1);
	\end{tikzpicture}
	\end{center}\caption{A model of soft crawler.}
\label{fig:crawler}
\end{figure}
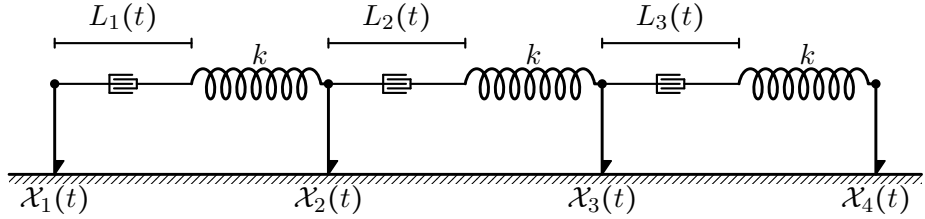

\subsection{A rate-independent model of soft crawler}\label{sec:rateind}
Let us consider the mechanical system illustrated in Figure \ref{fig:crawler}, consisting of a chain of $N$ blocks. 
Each couple of adjacent blocks is joined by a link composed by a spring in series with an actuator, namely an 
element of prescribed length $L_i(t)$, which is our control on the system. The body of the crawler can be therefore 
identified in the reference configuration by a set of $N$ points $\{\xi_1,\dots,\xi_N\}$.
We represent the state of the crawler in the deformed configuration with a vector $\myx=(\myx_1,\myx_2,\dots,\myx_N)$ in $\R^{N}$, 
where $\myx_i$ stands for the displacement of the point $\xi_i$.

We consider the locomotion of our model in the regime of very slow (quasi-static) actuation, so that inertial forces can be
neglected. Hence, the dynamics is described by a force balance between the friction forces acting on the body of the crawler and 
internal elastic forces associated to the deformations of the springs in the links, which can be written as
\begin{equation} \label{eq:forcebalance}
-D_\myx\EE(t,\myx)\in \partial_{\dot \myx} \RR(\dot \myx)
\end{equation}
Here $\EE(t,\myx)$ is the elastic energy of the crawler, and therefore can be expressed as the sum of the elastic energies 
$\EE_i(t,\myx)$ of each link, namely
\begin{equation}\label{eq:linkenergy}
\EE(t,\myx)=\sum_{i=1}^{N-1}\EE_i(t,\myx)=\sum_{i=1}^{N-1}\frac{k}{2}(\myx_{i+1}-\myx_i-L_i(t))^2
\end{equation}
We assume that the actuation functions $L_i\colon[0,T]\to \R$ are Lipschitz continuous. The constant $k>0$ is the elastic 
constant of the springs. Note that the same mathematical structure holds if we replace each actuator with an active control 
on the rest length of the corresponding spring, which is the case of robotic crawlers actuated e.g. by nematic elastomers \cite{dSGN,Jung}.

Each of the points $\xi_i$ is subject to an anisotropic dry friction, so that we can write the friction force $F_i$ acting on $\xi_i$ as
\begin{equation*}
	F_i=F_i(\dot \myx_i)\in\begin{cases}
	\{\mu_i^-\}  &  \text{if $\dot \myx_i< 0$} \\
	[-\mu_i^+,\mu_i^-]  &  \text{if $\dot \myx_i= 0$} \\
	\{-\mu_i^+\} &  \text{if $\dot \myx_i> 0$}
	\end{cases}
\end{equation*}
for some positive coefficients $\mu_i^\pm$. Hence, friction forces can be expressed variationally in \eqref{eq:forcebalance} 
as the subdifferential of the dissipation potential
\begin{equation} \label{xeq:dissipoint}
\RR(\dot \myx)=\sum_{i=1}^{N} \RR_i(\dot \myx_i) \qquad \text{with}\quad  \RR_i(\dot \myx_i)=\begin{cases}
-\mu_i^-\dot \myx_i  &  \text{if $\dot \myx_i\leq 0$} \\
\mu_i^+\dot \myx_i &  \text{if $\dot \myx_i\geq 0$}
\end{cases}
\end{equation}
We recall that $\partial_{\dot \myx}\RR(\myx)\subseteq \partial_{\dot \myx}\RR(0)$
since the function $\RR$ is positively homogeneous of degree one, and set
\begin{equation}\label{eq:defC0}
C_0:=\{\myx\in X : -\mu_i^-\leq\scal{e_i}{\myx}\leq \mu_i^+ \quad \text{for $i=1,\dots,N$}\}=\partial_{\dot \myx}\RR(0)
\end{equation}
where $e_1,\dots,e_n$ denotes the canonical base of $X$.
Since the friction forces in \eqref{eq:forcebalance} are bounded, we cannot allow too large initial elastic forces, 
hence we introduce the \emph{admissibility condition} for the initial state:
\begin{equation}
-D_\myx\EE(t,\myx_0)\in C_0
\end{equation}
In order to guarantee existence and uniqueness of solution for the Cauchy problem with an admissible initial state, 
we make the following assumption: for every subset of indices $J\subseteq\{1,\dots,N\}$ we have
\begin{equation}\label{eq:uniqueness}
\sum_{i\in J} \mu^+_i-\sum_{i\in J^c} \mu^-_i\neq 0 
\end{equation}
where $J^c$ denotes the complement of $J$. We refer to \cite[Section 2]{Gid18} for a complete proof and discussion. 
Since we will refer also later on to results obtained there, we observe for the reader's convenience that the coordinate 
$x$ and the sets $C,\Csh$ in \cite{Gid18} correspond respectively to $\myx,C_0,C$ in this paper.

In order to study the locomotion of our model, it is useful to introduce the projections:
\begin{equation}
\begin{array}{l}
y=\pi_Y(\myx):=\frac{1}{N}\sum_{i=1}^N \myx_i\in\R\\[2mm]
z=\pi_Z(\myx):=(\myx_2-\myx_1,\dots,\myx_N-\myx_{N-1})=:(z_1,\dots, z_{N-1})\in Z\cong \R^{N-1}
\end{array}
\end{equation}
In this way we can split the state of the crawler into two components: the term $y$ describes the position of the crawler, 
whereas $z$ describes its shape, namely the lengths of the $N-1$ links in the deformed configuration.

Setting, without loss of generality $0=y(0)=\pi_Y(\myx(0))$, our problem consists of finding suitable choices of the 
actuations $L_i$ that maximize $y(T)=\pi_Y(\myx(T))$.

\subsection{Formulation as a sweeping process}\label{sec:formsw}

We now show how we can pass from the dynamics \eqref{eq:forcebalance} for the model presented above to a sweeping 
process of the form \eqref{traj} and discuss the other elements of problem (P).

We observe that, since the elastic energy $\EE$ is invariant for rigid translations, it depends only on the shape $z$, namely
\begin{equation}\label{eq:linkenergy2}
\EE(t,\myx)=\scal{k z-\lsh(t)}{z}+\text{time-dependent term}\qquad
\end{equation}
where we define $\lsh(t)=\bigl(kL_1(t),\dots,kL_{N-1}(t)\bigr)$. 
The last term disappears in the dynamics \eqref{eq:forcebalance}, 
so can be neglected for our purposes.

We can reformulate the dynamics \eqref{eq:forcebalance} as the variational inequality
\begin{equation}
\scal{k \pi_Z(\myx(t))-\lsh (t)}{\pi_Z(u-\dot \myx(t))}+\RR(u)-\RR(\dot \myx(t))\geq 0 \quad \text{for every $u\in X$} 
\label{eq:VI}
\end{equation}
cf.~\cite{MieThe,MieRou}. It is easily verified that a function $\myx(t)$ satisfies \eqref{eq:VI} only if its projection $z(t)=\pi_Z(\myx(t))$ satisfies:
\begin{equation}
\scal{k z(t)-\lsh (t)}{w-\dot z(t)}+\RRsh (w)-\RRsh (\dot z(t))\geq 0 \quad \text{for every $w\in Z$} 
\label{eq:RVI}
\end{equation}
where the dissipation potential $\RRsh\colon Z\to \R$ is defined as
\begin{equation} \label{eq:RRsh}
\RRsh (z)=\min \bigl\{\RR(\myx):\myx\in X, \pi_Z(\myx)=z\bigr\}
\end{equation}
The potential $\RRsh$ is convex and positively homogeneous of degree one \cite[Lemma 2.1]{Gid18}.

We notice that, once \eqref{eq:RVI} is solved, the solution to \eqref{eq:VI} can be recovered straightforwardly. 
Indeed, we observe that \eqref{eq:uniqueness}
allows to define a function $\vm\colon Z\to \R$ as the unique satisfying 
\begin{equation}
\RRsh(\pi_Z(\myx))=\RR(\myx) \qquad \text{if and only if}\qquad  \pi_Y(\myx)=\vm(\pi_Z(\myx))  
\end{equation}
cf.~\cite[Lemma 3.2]{Gid18}. 
This property allows us to recover the evolution of $y(t)$ from that of $z(t)$, as 
\begin{equation} \label{eq:Yrecover}
\dot y(t)=\pi_Y(\dot \myx(t))=\vm(\pi_Z(\dot \myx(t)))=\vm(\dot z(t))
\end{equation}

We can now reformulate the problem for the shape coordinates  \eqref{eq:RVI} in the differential inclusion formulation analogue 
to \eqref{eq:forcebalance}, namely
\begin{equation} \label{eq:Rfb}
-k z +\lsh(t)\in \partial_{\dot z}\RRsh(\dot z)
\end{equation}
Let us denote by  $\RRsh^*$ the Legendre transform of $\RRsh$. Setting $C:=\partial_{\dot z}\RRsh(0)$, by the Legendre-Fenchel equivalence we obtain 
\begin{align} \label{eq:Rincl}
\dot z\in \partial_{\zeta}\RRsh^*(-k z +\lsh(t))=\NN_{C}(-k z +\lsh(t))
\end{align}
We observe that $C$ is a polyhedron in $Z$ of the form \eqref{eq:polyhedron}; indeed, by \cite[Lemma 2.2]{Gid18} we deduce  that
\begin{equation}
C=\{z\in Z : -\mu_i^-\leq\scal{\pi_Z(e_i)}{z}\leq \mu_i^+ \quad \text{for $i=1,\dots,N$}\}
\end{equation}
where $e_1,\dots,e_N$ denotes the canonical base of $\R^N$.

Let us now consider the change of variables $x(t)=-kz(t)+\lsh(t)$ and set $u(t):=D_t\lsh (t)$. 
The locomotion of our system, by \eqref{eq:Yrecover} and \eqref{eq:Rincl}, is described by
\begin{equation}\label{eq:cr_dyn}
		\begin{cases}
	\dot{x}(t)\in -\NN_{C}(x(t)) + u(t)\\
	\dot{y}(t) = \vm(u(t)-\dot x(t))
	\end{cases} 
\end{equation}

\subsection{Formulation of the control problem (P)}\label{sec:formcontr}
Now that we have shown how the locomotion of our system can be described by the dynamics \eqref{eq:cr_dyn}, we  
discuss the cost functional and the constraints required in our control problem (P).

A locomotion strategy, be it for crawling, swimming, walking or running, usually can be identified with a gait, namely a periodic pattern  
that is repeated a large number of times. Let us denote with $T$ the period of a gait. 
In our model, this correspond to assume that the function $L_i$ are $T$-periodic, which in terms of the control $u$ in \eqref{eq:cr_dyn} reads
\begin{equation*}
\int_0^T u(t) \dd t=0
\end{equation*}
It is also reasonable to assume that there are some constraints on the speed at which the shape change occurs, corresponding to 
a uniform Lipschitz constant for all the admissible actuations. This, for the dynamics \eqref{eq:cr_dyn}, 
is exactly the constraint $u(t)\in U$, where the set $U$ is of the form 
\begin{equation} \label{eq:Uform}
U=\Pi_{\ell =1}^d [-a_\ell, a_\ell]\subset\mathbb{R}^d ,
\end{equation}
where  $a_\ell >0$ for all $\ell=1,\ldots,d$. 

Since we are considering locomotion model in one-dimension, we want to maximize the advancement of the crawler 
produced by the chosen gait, plus possibly subtracting a cost for the actuation.

Due to the hysteretic behaviour of the sweeping process, a periodic input (in the sense above) does not necessarily produce a 
periodic change in the shape coordinates $w$, and the produced displacement $y(T)-y(0)$ of the crawler in a period depends on its initial shape $z(0)$.
Hence the optimal gait may depend on the ability to exploit a specific initial state, and on the exact number of periods we are considering.  
This does not suit our purposes, since we are interested in an arbitrarily long time behaviour.
However it is know that sweeping processes with a periodic input converge asymptotically to a periodic output
\cite{Kre,Mak,Mak2,CGV}.  
This has already been observed in the models with one link \cite{GdS1} and two links \cite{GdS2}, also noticing that in 
the specific case of some \lq\lq common sense" gaits the convergence to the asymptotic periodic orbit occurs within the first period.

Since we are interested in the long time behaviour, we can therefore optimize on the possible limit cycles 
(for the shape $w$) associated to a given gait, and evaluate the cost functional on a single period. 
This corresponds to optimize on the trajectories that satisfy the periodicity condition
\begin{equation*}
	x(0)=x(T)
\end{equation*}
Regarding the cost functional, denoting with $f_2(t,u)$ a possible cost of the actuation, we have
\begin{align*}
	J(x,u)&=
y(T)-y(0)-\int_0^Tf_2(t,u(t))\big)\dd t \\	&=\int_0^T \big(\vm(u(t)-\dot{x}(t))- f_2(t,u(t))\big) \dd t 
\end{align*}
that corresponds to \eqref{eq:cost} for $g(t,x,u)=u$ and $f_1=\vm$.

The reader may be wondering why we are considering a periodic fixed time problem, instead of a free time or minimum time problem. 
The main motivation is that a gait works as an universal strategy, but might be slightly suboptimal for a specific prescribed problem.
For instance, we expect that some case-by-case tuning on the first and last iterations of the gait, breaking periodicity,  
may provide a minor improvement to the solution. The natural way to avoid these complications is to optimize on the 
limit cycles of the system, as argued above.
The reader may then wonder why to consider a fixed period. From the examples discussed in \cite{dSGN} and \cite{GdS1}, 
one may observe that the action of the actuation of the crawler can be divided into two kinds of effects: 
a change in the tensions of the links ($\dot x\neq 0$) and movement of the contact points ($u-\dot x\neq 0$). 
A sufficiently complex change in the tension is necessary to reach suitable configurations that allow to move each contact point, 
and represents a sort of \lq\lq fixed cost\rq\rq\ necessary for locomotion. Every part of the period that is not used for 
the necessary tension change is best spent in pure locomotion $(u\in \NN_{C}(w))$. A larger period hence increases the 
ratio of the time that is used for locomotion, leading to a better strategy. Hence, we do not expect a free period to 
affect in a significant way the qualitative structure of optimal solution.
We also notice that a fixed period, combined with the bounds \eqref{eq:Uform} on the rate of shape change, implicitly provide 
a bound on the maximum contraction and elongation of each link. This represents an additional physical constraint that should 
be incorporated in our model if such an assumption is removed.
Since, as we will see, our problem is already quite complex as it is, we think that our approach is a preferable 
first step in the study of optimality for crawling locomotion. 

\subsection{Computation of the function $\vm$}\label{sec:compf1}
We now compute explicitly the function $\vm$.
Let us consider $\myx\in C_0$ and denote with $J(\myx)$ the set of active constraints in $\myx$, namely
\begin{equation*}
J(\myx)=\bigl\{j\in\{1,\dots,N\}: \myx_j= -\mu_j^- \bigr\}\cup \bigl\{j\in\{N+1,\dots,2N\}: \myx_{j-N}= \mu_{j-N}^+ \bigr\}
\end{equation*}
(here the number of constraints is $\sigma=2N$).

Let us consider a vector $v\in \R^N$. By the convexity of $C_0$, there exists  $\myx\in C_0$ such that $v\in\NN_{C_0}(\myx)$. 
Moreover we can write, for some non-negative $\lambda_i\geq 0$, $i=1\dots, 2N$,
\begin{equation}\label{eq:decomp}
v=\sum_{i=1}^N \lambda_i e_i+\sum_{i=N+1}^{2N} -\lambda_i e_{i-N} 
\end{equation}
We notice that, since the vector $e_i$ are linearly independent, the coefficients $\lambda_i$ are uniquely determined. 
Moreover, they satisfy the active constraint condition
\begin{equation}\label{eq:active_decomp}
\lambda_i>0 \Rightarrow i\in \actcon(\myx) 
\end{equation}
Let us set $\nu_i=\pi_Z(e_i)$ for $i=1\dots,N$ and $\nu_i=\pi_Z(-e_{i-N})$ for $i=N+1\dots,2N$.
We have
\begin{equation}\label{eq:z_decomp}
\pi_Z(v)=\sum_{i=1}^{2N} \lambda_i \nu_i
\end{equation}
Let us recall that $\pi_Z$ is a linear diffeomorphism between $Z$ and $\pi_Y^{-1}(0)$, and that  $C=\pi_Z(C_0\cap \pi_Y^{-1}(0))$, 
see \cite[Lemma 2.2]{Gid18}. 
We notice that, since $C_0\cap \pi^{-1}_Y(0)$ is a section of a convex set, every vector $z\in Z$ can be written as $z=\pi_Z(v)$, 
with $v\in\NN_{C_0}(\zeta)$ for some $\zeta\in C_0\cap \pi_Y^{-1}(0)$. Moreover, condition \eqref{eq:uniqueness} implies that 
such vector $v$ is unique, see \cite[Lemma 2.3]{Gid18}.
These facts imply that every vector $z\in Z$ admits a decomposition \eqref{eq:z_decomp}, where the coefficients $\lambda_i$ are uniquely determined.

We can finally use such decomposition of $\dot z$ to give an explicit expression for $\vm$:
\begin{equation}\label{eq:vmdec}
\vm\left(\sum_{i=1}^{2N} \lambda_i\nu_i\right)=\pi_Y\left(\sum_{i=1}^N \lambda_i e_i-\sum_{i=N+1}^{2N} \lambda_i e_{i-N} \right)=
\sum_{i=1}^N \frac{\lambda_i}{N}-\sum_{i=N+1}^{2N} \frac{\lambda_i}{N}
\end{equation}
where we used the fact that $\pi_Y(e_i)=\frac{1}{N}$ and \eqref{eq:active_decomp}.
We observe that \eqref{eq:vmdec} in particular implies that $\vm$ is Lipschitz continuous.

\begin{rem}[Properties of $\vm$]\label{rem:vm}
We observe that, by construction, the function $\vm$ is positively homogeneous of degree one. According to the choice of the parameters $\mu_i^\pm$, it can be convex, concave, or more often neither.

Let us consider for example the case $N=3$, with homogeneous friction for the three contact points, namely $\mu_i^+=\mu^+$ and $\mu_i^-=\mu^-$ for $i=1,2,3$. Excluding the critical values according to \eqref{eq:uniqueness}, we have three situations. If $\mu^->2\mu^+$ then $\vm$ is convex and positive outside the origin, meaning that the crawler can move only forward. Symmetrically, if $\mu^+>2\mu^-$ then $\vm$ is concave and negative outside the origin, meaning that the crawler can move only backward. In the intermediate case $\frac{1}{2}\mu^-<\mu^+<2\mu^-$, the function $\vm$ is neither convex nor concave, and assumes both positive and negative values, with the crawler able to move in both directions. In particular, the origin is a \lq\lq monkey-saddle point\rq\rq\ (namely, a saddle point with three ridges and three ravines).
Remarkably, the mathematically desirable case of concave $\vm$ is also the less meaningful physically: indeed, the crawler can only move backward whereas we want to optimize its movement forwards, so that, for any reasonable actuation cost $f_2$, the optimal strategy is trivially to stay idle and not move.
\end{rem}


\section{Application to a one-link crawler and remarks }\label{sec:crawler}
We analyse now the information provided by Theorem \ref{nec} for the model introduced in Section \ref{sec:model}.
For simplicity, we consider only the one-link crawler. In this case,  taking into account \eqref{eq:vmdec}, 
the optimal control problem reads as follows. \medbreak

\emph{Given an interval $C:=[a,b]$, $T>0$ and a smooth convex function $f\colon \R\to \R$,
	\begin{equation*}
	\text{maximize $J(x,u):= \int_0^T \left( \frac{1}{2}\,| u(t)-\dot{x}(t)| - f(u(t)) \right)\dd t$}
	\end{equation*}
	subject to
	\begin{equation}\label{onecrawl}
	\dot{x}\in -\NN_C(x) + u \;\text{ a.e. on $[0,T]$, }\; x(T)=x(0)\in C,
	\end{equation}
	where $u(t)\in [-1,1]$ a.e. and $\int_0^T u(t)\dd t =0$.}

\medbreak
Let $(\bar{x},\bar{u})$ be an optimal trajectory-control pair.
Applying Theorem \ref{nec}, we obtain the following necessary conditions:

\medbreak
\emph{there exist $\lambda \ge 0$, a $BV$ function $p\colon [0,T]\to \mathbb{R}$, two finite positive Radon measures $\dd \xi^1$
	and $\dd \xi^2$, $\omega\in \mathbb{R}$, and $\psi\in L^1(0,T)$ such that:
	\begin{itemize}[]
		\item[1)] $\text{supp}(\dd \xi^1) \subseteq \{t\in [0,T]: \bar{x}(t)=a\}$;
		\item[2)] $\text{supp}(\dd \xi^2) \subseteq \{t\in [0,T]: \bar{x}(t)=b\}$;
		\item[3)] $\dd p = -\dd \xi^1 + \dd \xi^2\;\text{ and }\; p(T)=p(0)$;
		\item[4)] $\psi(t)= -p(t)-\omega -\lambda D_u f(\bar{u}(t))\in \NN_{[-1,1]}(\bar{u}(t))\;\text{ a.e.}$;
		\item[5)] $\lambda + \| p\|_\infty =1.$
\end{itemize}}

\medbreak
Observe first that there is a degenerate case, namely $p(t)\equiv-\omega\neq 0$ and $\lambda=0$,
that is satisfied by all trajectories of \eqref{onecrawl}, with $\dd \xi^1=\dd \xi^2=0$ and $\psi\equiv 0$, for any cost $f$.

Now we analyse a few nondegenerate cases, in the (desirable) event they occur. To simplify the analysis, we take either
$f(u)\equiv 0$ or $f(u)=\frac12 u^2$.

Let us first focus on the trajectories $x(\cdot)$ in the interior of $C$, namely the ones such that $a< x(t) < b$ for all $t$.
If $f\equiv 0$, we observe that they are all local extrema, because in this
case $\dot{x}(t)= u(t)$ a.e. and the functional $J$ vanishes in a neighbourhood of $(x,u)$.

In the case $f(u)=\frac12 u^2$, instead,
then necessary conditions provide more information. Indeed, assume again that $a<x(t)<b$ for all $t$. 
Then, by the support conditions 1) and 2) and the adjoint equation 3), $p$ is constant, so that $p+\omega$ is constant as well. 
Assume now the nondegeneracy condition $\lambda =1$ is valid. Then the extremality condition 4) reads as
\begin{equation*}
0\in p+\omega + u(t) + \NN_{[-1,1]}(u(t))\;\text{for a.e. }\; t.
\end{equation*}
The right hand side of the above expression is  a strictly monotone function of $u$, thus there exists one and only one $u^\ast$ such that
$ p+\omega + u^\ast + \NN_{[-1,1]}(u^\ast)=0$, i.e., $u(t)\equiv u^\ast$. Since all feasible controls must have zero mean,
$u\equiv 0$. Therefore, in this first nondegenerate case, the only extremal solutions that lie in the interior of $[a,b]$
for all $t$ are constant. Observe that the above analysis remains valid if $f(\cdot)$ is strictly convex with minimum
at $0$. Observe also that, still assuming $\lambda = 1$, the above argument implies that $\bar{u}$ is constant (not
necessarily zero) along any interval $I$ in which $\bar{x}$ lies in the interior of $[a,b]$.

Let us now consider the trajectories that touch the boundary of $[a,b]$. In particular, let us notice that, in order to achieve 
\lq\lq true\rq\rq\ locomotion both the boundary points must be touched by the trajectory, since one contact point can be moved 
only if $x(t)=a$ and the other only if $x(t)=b$: a necessary and sufficient condition for this to happen is $T> 2 (b-a)$. We assume again $f\equiv 0$.
In this case, the adjoint vector $p$ may not be constant, being however constant in every interval where $\bar{x}$ 
lies in the interior of $[a,b]$.
To proceed, we assume the further nondegeneracy condition 
\begin{equation}\label{nndg}
p(t)+\omega \neq 0\;\text{  for all }\; t\in [0,T].
\end{equation}
Under this condition, 4) implies that $\bar{u}(t)=-\text{sign}(p(t)+\omega)$ a.e. 
Let $\mathcal{I}:= \{ t : a< \bar{x}(t) < b\}$. Since $p$ is constant in every connected component of $\mathcal{I}$, 
$\bar{u}\in\{\pm 1\}$ is constant as well in any such component. Therefore, $\mathcal{I}$ is a finite union of open intervals, 
each of them having length $b-a$, except possibly the first and the last one, whose lengths however sum up to $b-a$ as well
due to the periodicity condition on $\bar{x}$. Summarizing, in this case optimal controls are bang-bang with finitely many switchings.

\subsection{Remarks on an assumption of Theorem \ref{th:ex}}\label{rem:ex}
The above example also illustrates why it is reasonable to expect optimal controls $\bar u(t)$ to have bounded variation. 
Indeed, we show that, given a state-control pair $(x,u)$ with unbounded variation, we can always modify it to obtain a control 
pair $(\tilde x,\tilde u)$ with $\norm{x-\tilde x}_{L^\infty[0,T]}$ arbitrarily small, and such that 
$J(\tilde x,\tilde  u)=J(x,u)$ if $f_2=0$, while $J(\tilde x,\tilde u)>J(x,u)$ if $f_2=\frac{1}{2}u^2$.

Indeed, let us consider a control $u(t)$ with unbounded variation, and let $t^*$ be a time such that in every 
neighbourhood of $t^*$ the function $u(t)$ has unbounded variation. We distinguish two cases.

Firstly, consider the case $u(t^*)\in \inter C$ and take a sufficiently small interval $[t_1,t_2]$ such that 
$t_1<t^*<t_2$ and  $u(t)\in \inter C$ for every $t\in[t_1,t_2]$. Then we define a new state-control pair as
\begin{equation*}
    (\hat x,\hat u):=\begin{cases}
    \left(\frac{(t_2-t)x(t_1)+(t-t_1)x(t_2)}{t_2-t_1},\strokedint_{t_1}^{t_2}u(s)\dd s\right) &\text{for $t\in[t_1,t_2]$}\\
    (x(t),u(t)) &\text{elsewhere}
    \end{cases}
\end{equation*}
We observe that $(\hat x,\hat u)$ satisfies \eqref{onecrawl}, with $J(\hat x,\hat u)=J(x,u)$ if $f_2=0$, and $J(\hat x,\hat u)>J(x,u)$ 
if $f_2=\frac{1}{2}u^2$. Moreover $(\hat x,\hat u)$ has bounded variation in $[t_1,t_2]$.

Secondly, we consider the case when $u(t^*)$ lies in the boundary of $C$; for simplicity we discuss the case  $u(t^*)=b$. 
We take a sufficiently small interval $[t_1,t_4]$ such that $t_1<t^*<t_4$ and  $u(t)\in (a,b]$ for every $t\in[t_1,t_4]$. 
Moreover we set $t_2:=\min \{t\in [t_1,t_4] : x(t)=b \}$ and $t_3:=\max \{t\in [t_1,t_4] : x(t)=b \}$ Then we define a new state-control pair as
\begin{equation*}
(\hat x,\hat u):=\begin{cases}
\left(\frac{(t_2-t)x(t_1)+(t-t_1)b}{t_2-t_1},\strokedint_{t_1}^{t_2}u(s)\dd s\right) &\text{for $t\in[t_1,t_2]$}\\[1mm]
\left(b, \strokedint_{t_2}^{t_3}u(s)\mathds{1}_{u^{-1}(b)}(s)\dd s\right)&\text{for $t\in[t_2,t_3]$}\\[1mm]
\left(\frac{(t_4-t)b+(t-t_3)x(t_4)}{t_4-t_3},\strokedint_{t_3}^{t_4}u(s)\dd s\right) &\text{for $t\in[t_3,t_4]$}\\[1mm]
(x(t),u(t)) &\text{elsewhere}
\end{cases}
\end{equation*}
Also in this case it is easy to see that $(\hat x,\hat u)$ satisfies \eqref{onecrawl}, with $J(\hat x,\hat u)=J(x,u)$ if $f_2=0$, 
and $J(\hat x,\hat u)>J(x,u)$ if $f_2=\frac{1}{2}u^2$. Moreover $\hat{u}$ has bounded variation in $[t_1,t_4]$.

We notice that, by the compactness of $[0,T]$, with a finite number of such modifications, we can obtain the control 
pair $(\tilde x, \tilde u)$ as desired. Furthermore, by considering sufficiently small intervals, due to the Lipschitz 
continuity of $x(t)$, we can obtain an arbitrarily small $\norm{x-\tilde x}_{L^\infty[0,T]}$.

\subsection{Remarks on the necessary conditions of Theorem \ref{nec}}\label{concl}\ \smallskip
\begin{enumerate}[wide,  labelindent=0pt, label=\arabic*.]
\item Observe that, differently from classical state constrained Bolza problems, the Hamiltonian contains only one summand of the
integral cost: the part involving $f_1 \big(g(t,x(t),u(t))-\dot{x}(t)\big)$ is missing, due to a cancellation that occurs in the proof of 
Theorem \ref{Theorem6.2}.\smallskip
\item  It is well known (see, e.g., \cite[Sec.~10.6]{vinter}  and \cite{AK}) that necessary optimality conditions for state constrained
control problems may be satisfied by \textit{all} state-control pairs.
The zero mean condition on the control $u$ provides the further multiplier $\omega\in\mathbb{R}^d$, and this is why the stronger nontriviality
condition \eqref{nndg} plays a role. However, up to now there are no sufficient conditions for \eqref{nndg} to hold. Similarly,
conditions ensuring $\lambda =1$ need to be studied, since classical results of this type do not apply to our setting.
\end{enumerate}

\section{Proof of Theorem \ref{th:ex}}\label{sec:proofex}
The proof of the existence result is based on a strong convergence argument that is essentially contained in \cite{BKS} (see also
Sec.~1.3 in \cite{Kre}). We present here a version of this result that is fit for our setting.
\begin{lem}\label{strong}
Let $C\subset X$ be closed and convex and let $g$ be as in the statement of Theorem \ref{th:ex}. Let $u_\ell,u\in L^2(0,T;\mathbb{R}^d)$,
$\ell\in\mathbb{N}$ be such that
\[
u_\ell\to u\;\text{ in }L^2. 
\]
Let $x_\ell^0\in C$ be such that
\[
x_\ell^0\to x^0 
\]
and let $x_\ell\colon [0,T]\to X$ be a solution of the Cauchy problem
\[
\begin{cases}
\dot{x}&\in - \NN_C(x) + g(t,x,u_\ell) \\
x(0)&=x_\ell^0.
\end{cases}
\]
Then there exist a solution $x$ of the Cauchy problem 
\begin{equation}\label{CP}
\begin{cases}
\dot{x}&\in - \NN_C(x) + g(t,x,u) \\
x(0)&=x^0,
\end{cases} 
\end{equation}
and a subsequence $\{x_{\ell_k}\}$ such that
\[
x_{\ell_k}\to x\;\text{ strongly in }W^{1,2}([0,T];X). 
\]
\end{lem}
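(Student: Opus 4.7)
The plan is to extract a uniformly convergent subsequence via Ascoli--Arzel\`a, identify its limit as a solution of \eqref{CP} through the standard monotone passage to the limit, and then upgrade the weak $L^2$ convergence of the derivatives to strong $L^2$ convergence by exploiting the Moreau orthogonal decomposition intrinsic to sweeping processes with a fixed convex constraint.

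First I note that $x_\ell(t)\in C$ forces $\dot x_\ell(t)\in T_C(x_\ell(t))$ for a.e.~$t$; combined with $g(t,x_\ell(t),u_\ell(t))-\dot x_\ell(t)\in\NN_C(x_\ell(t))$ and the polarity of $T_C$ and $\NN_C$ for convex $C$, this yields the Moreau orthogonal decomposition
\[
g(t,x_\ell,u_\ell)=\dot x_\ell+\eta_\ell,\qquad \eta_\ell\in\NN_C(x_\ell),\qquad \langle\dot x_\ell,\eta_\ell\rangle=0.
\]
In particular $|\dot x_\ell(t)|\le |g(t,x_\ell,u_\ell)|\le L$ a.e., so $\{x_\ell\}$ is equi-Lipschitz. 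By Ascoli--Arzel\`a and weak $L^2$-compactness, along a subsequence (not relabelled) $x_\ell\to x$ uniformly on $[0,T]$ and $\dot x_\ell\rightharpoonup \dot x$ weakly in $L^2([0,T];X)$, with $x(0)=x^0$ and $x(t)\in C$ by closedness of $C$. Extracting a further subsequence along which $u_\ell\to u$ a.e., the continuity of $g$ in $(x,u)$ and the uniform bound $|g|\le L$ give, by dominated convergence, $g_\ell(\cdot):=g(\cdot,x_\ell,u_\ell)\to g_\infty(\cdot):=g(\cdot,x,u)$ strongly in $L^2$.

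To see that $x$ solves \eqref{CP} I exploit that, for any measurable selection $v(t)\in C$, the convex normality $g_\ell-\dot x_\ell\in\NN_C(x_\ell)$ yields $\langle g_\ell-\dot x_\ell,x_\ell-v(t)\rangle\ge 0$ a.e.; integrating and passing to the limit via the strong--weak pairing (strong $L^2$ for $g_\ell$ and $x_\ell-v$, weak $L^2$ for $\dot x_\ell$) gives $\int_0^T\langle g_\infty-\dot x,x-v\rangle\dd t\ge 0$, whence $g_\infty(t)-\dot x(t)\in\NN_C(x(t))$ a.e., so $x$ satisfies \eqref{CP}.

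The decisive strong-convergence step uses the orthogonality of Step 1, which rewrites as $|\dot x_\ell(t)|^2=\langle \dot x_\ell(t),g_\ell(t)\rangle$ a.e., and the analogous identity for the limit; therefore
\[
\int_0^T|\dot x_\ell|^2\dd t=\int_0^T\langle\dot x_\ell,g_\ell\rangle\dd t\longrightarrow\int_0^T\langle\dot x,g_\infty\rangle\dd t=\int_0^T|\dot x|^2\dd t,
\]
where the middle limit combines the weak $L^2$ convergence of $\dot x_\ell$ with the strong $L^2$ convergence of $g_\ell$. Convergence of $L^2$ norms together with weak convergence forces $\dot x_\ell\to\dot x$ strongly in $L^2$, and therefore $x_\ell\to x$ strongly in $W^{1,2}([0,T];X)$. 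The main obstacle is precisely this last step: for a generic differential inclusion, weak convergence of derivatives cannot be upgraded to strong convergence. What rescues the argument is that $C$ is \emph{fixed and convex}, so the Moreau orthogonality $\langle\dot x,\eta\rangle=0$ holds and supplies the pivotal identity $|\dot x|^2=\langle\dot x,g\rangle$ on both sides of the passage to the limit.
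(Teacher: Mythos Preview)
Your argument is correct and essentially identical to the paper's: both hinge on the orthogonality $\langle\dot x_\ell,\eta_\ell\rangle=0$ to upgrade weak to strong $L^2$ convergence of $\dot x_\ell$ via convergence of norms (the paper packages this as $\|\dot x_\ell-\xi_\ell\|^2=\|\dot x_\ell+\xi_\ell\|^2$, you as $|\dot x_\ell|^2=\langle\dot x_\ell,g_\ell\rangle$, which are the same identity). One minor imprecision worth tightening: polarity of $T_C$ and $\NN_C$ alone only gives $\langle\dot x_\ell,\eta_\ell\rangle\le 0$; the paper secures equality by the two-sided difference-quotient argument $\langle\eta_\ell(t),x_\ell(t\pm h)-x_\ell(t)\rangle\le 0$, which amounts to observing that also $-\dot x_\ell(t)\in T_C(x_\ell(t))$ --- exactly what makes your appeal to the Moreau decomposition legitimate.
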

\begin{proof}
It is well known (see, e.g., \cite{MaThi}) that $\{ x_\ell\}$ is uniformly bounded in $W^{1,2}([0,T];X)$, so that, up to a
subsequence, $x_\ell$ converges weakly in $W^{1,2}([0,T];X)$ to some $x\colon [0,T]\to X$. By standard arguments (see, e.g., \cite{MaThi}), $x$ is a 
solution of \eqref{CP}. Moreover,
\begin{equation}\label{gm}
g(t,x_\ell,u_\ell)\to  g(t,x,u)\;\text{ in } L^2.
\end{equation}
Set $\xi_\ell:= g(t,x_\ell,u_\ell)-\dot{x}_\ell$ and observe that $\xi_\ell(t)\in \NN_C(x_\ell(t))$ a.e. Therefore, for a.e. $t$, all $\ell$ and all $h>0$ small enough
one has both
\[
\Big\langle \xi_\ell(t),\frac{x_\ell(t+h)-x_\ell(t)}{h}\Big\rangle  \le 0
\]
and
\[
\Big\langle \xi_\ell(t),\frac{x_\ell(t-h)-x_\ell(t)}{h}\Big\rangle \le 0.
\]
By passing to the limit as $h\to 0^+$, one obtains that
\begin{equation}\label{zerom}
\langle\xi_\ell(t),\dot{x}(t)\rangle =0\quad\text{ a.e.} 
\end{equation}
The sequence $\{\xi_\ell\}$ converges weakly in $L^2$ to $\xi:= g(t,x,u)-\dot{x}$. Since $\xi(t)\in \NN_C(x(t)$ for a.e. $t$, the same argument
as above yields
\begin{equation}\label{zeroinfty}
\langle \xi(t),\dot{x}(t)\rangle = 0\quad\text{ a.e.} 
\end{equation}
By \eqref{gm}, $\dot{x}_\ell+\xi_\ell\to\dot{x}+\xi$ strongly in $L^2([0,T;X)$. Since $\dot{x}_\ell-\xi_\ell$ converges to $\dot{x}-\xi$ weakly in $L^2$,
the strong convergence is equivalent to
\begin{equation}\label{strconv}
\| \dot{x}_\ell-\xi_\ell\|_{L^2}\to \|\dot{x}-\xi\|_{L^2}. 
\end{equation}
To show \eqref{strconv}, observe that, by \eqref{zerom} and \eqref{zeroinfty},
\[
\|\dot{x}_\ell-\xi_\ell\|^2_{L^2}= \| \dot{x}_\ell\|^2_{L^2}+\|\xi_\ell\|^2_{L^2}= \|\dot{x}_\ell+\xi_\ell\|^2_{L^2}
   \to  \| \dot{x}+\xi\|^2_{L^2}= \| \dot{x}-\xi\|^2_{L^2},
\]
and the proof is concluded.
\end{proof}
Theorem \ref{th:ex} now follows easily.
\begin{proof}[Proof of Theorem \ref{th:ex}]
Let $\{(x_\ell,u_\ell)\}$ be a maximizing sequence for $\mathscr{I}$ among solutions of \eqref{traj} with $u_\ell\in\mathscr{U}_K$ for all $\ell$. By Helly's
theorem, there exists $\bar{u}$ such that, up to a subsequence, $u_\ell\to\bar{u}$ pointwise, so that $\bar{u}(t)\in U$ for all $t$,
and actually $u_\ell\to\bar{u}$ in $L^2$. Moreover, $\int_0^T\bar{u}(t)\dd t=0$. Since $C$ is compact, up to taking another subsequence we may assume that
$x_\ell(0)\to x^0$ for some $x^0\in C$. Then, by Lemma \ref{strong}, $x_\ell\to \bar{x}$ strongly in $W^{1,2}$, where $\bar{x}$ is the solution of \eqref{traj}
corresponding to $\bar{u}$ and with initial condition $\bar{x}(0)=x^0$ ($\bar{x}$ is easily seen to be $T$-periodic). Then
\[
\mathscr{I}(\bar{x},\bar{u})\ge \lim_{\ell\to\infty} \mathscr{I}(x_\ell,u_\ell), 
\]
so that $(\bar{x},\bar{u})$ is obviously an optimal state-control pair.
\end{proof}

\section{Discrete approximations of trajectories}\label{sec:discappr}
In order to apply the discretization approach, we need first to establish a result on discrete approximations of 
solutions of the general sweeping process
\begin{equation}\label{traj00}
\dot{x}\in - \NN_C(x) + g(t,x,u)\;\text{ a.e.},\;  u(\cdot) \text{ measurable with } u(t)\in U,
\end{equation}
where the polyhedron $C\subset X$, the function $g$ and the control set $U$ satisfy the assumption of Section \ref{sec:statement}.
A similar result was obtained in \cite{gmd}; here the assumptions on the reference process $(\bar{x},\bar{u})$ are weakened, as $\dot{\bar{x}}$
and $\bar{u}$ are no longer supposed to have bounded variation.

Let us first state a lemma on piece-wise constant approximations of functions with  prescribed average.
\begin{lem}\label{Lemma1}
	Let $T>0$ and $\bar{u}\in L^1(0,T;\mathbb{R}^n)$. For each $m\in\mathbb{N}$, set $t_m^i=\frac{iT}{2^m}$, $i=0,1,\ldots,2^m$, 
	and $I_m^i=[t_m^i,t_m^{i+1})$.
	Define
	\begin{equation}\label{u_m}
	u_m(t):= \sum_{i=0}^{2^m-1} \strokedint_{I_m^i} \bar{u}(s)\dd s \:\mathds{1}_{I_m^i}(t) .
	\end{equation}
	Then $u_m\to \bar{u}$ a.e. on $[0,T]$ and $\strokedint_0^T u_m=\strokedint_0^T\bar{u}$.
\end{lem}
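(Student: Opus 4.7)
The plan has two independent pieces, one algebraic and one measure-theoretic. For the equality of averages I would simply compute: since $u_m$ is constant on each $I_m^i$ with value $\strokedint_{I_m^i}\bar u$ and $|I_m^i|=T/2^m$,
\[
\int_0^T u_m(t)\dd t=\sum_{i=0}^{2^m-1}|I_m^i|\strokedint_{I_m^i}\bar u(s)\dd s=\sum_{i=0}^{2^m-1}\int_{I_m^i}\bar u(s)\dd s=\int_0^T \bar u(s)\dd s,
\]
and dividing by $T$ gives $\strokedint_0^T u_m=\strokedint_0^T\bar u$. This part is immediate and needs no further comment.

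For the pointwise convergence I would invoke the Lebesgue differentiation theorem, but with the caveat that the dyadic intervals $I_m^i$ are not balls centered at $t$. The standard way around this is the bounded eccentricity trick: if $t\in I_m^i$, then $I_m^i\subset B_{T/2^m}(t)$ while $|B_{T/2^m}(t)|=2|I_m^i|$, so the ratio of measures is uniformly bounded below. By the Vitali/Besicovitch form of Lebesgue's differentiation theorem, the averages over any such sequence of sets shrinking to $t$ with bounded eccentricity converge to $\bar u(t)$ at every Lebesgue point of $\bar u$, hence almost everywhere. Alternatively one can view $u_m$ as the conditional expectation of $\bar u$ with respect to the dyadic $\sigma$-algebra generated by $\{I_m^i\}$ and invoke Doob's martingale convergence theorem componentwise; since the dyadic $\sigma$-algebras generate the Borel $\sigma$-algebra of $[0,T]$ up to null sets, $u_m\to\bar u$ a.e.

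There is no real obstacle here: both assertions rely on well-known facts. The only point to flag is that a naive citation of ``Lebesgue differentiation'' on non-centered intervals requires the eccentricity remark above, and one should note that this works componentwise so that the vector-valued case $\bar u\in L^1(0,T;\mathbb R^n)$ is handled coordinate by coordinate with no additional work.
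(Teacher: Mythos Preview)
Your proof is correct and, for the pointwise convergence, rests on the same idea as the paper's: Lebesgue differentiation applied to the non-centered dyadic intervals containing $t$. The only difference is presentational. Where you invoke the bounded-eccentricity (``nicely shrinking'') form of the theorem as a black box, the paper carries out that very argument by hand: it splits $\strokedint_{I_m(t)}\bar u$ into the contributions over $[\tau_m(t),t]$ and $[t,\tau_m(t)+h_m)$, writes the average as a convex combination of the two one-sided averages, and applies the one-sided Lebesgue differentiation theorem to each piece. Your martingale alternative is a genuinely different route and equally valid; it has the minor advantage of not needing any eccentricity remark, since conditional-expectation convergence for a filtration generating the Borel sets is self-contained. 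For the preservation of the average, both you and the paper simply observe it is immediate from the construction.
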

\begin{proof}

	Set $h_m=\frac{T}{2^m}$. Let $t\in [0,T)$ and let $I_m(t):=[\tau_m(t),\tau_m(t)+h_m)$ be the unique interval 
	$I_m^i$ such that $t\in I_m^i$. By Lebesgue differentiation theorem, $\lim_{h\to 0^+} \frac{1}{h}\int_t^{t+h}\bar{u}(s)\,\dd s=
	\lim_{h\to 0^+} \frac{1}{h}\int_{t-h}^t\bar{u}(s)\dd s=\bar{u}(t)$ almost everywhere. Thus, for almost every $t$,
	\[
	\begin{split} 
	\lim_{m\to\infty}\strokedint_{I_m(t)}\bar{u}&=\lim_{m\to\infty}\frac{1}{h_m}\left(\int_{\tau_m(t)}^t\bar{u} + \int_t^{\tau_m(t)+h_m}\bar{u} \right)\\
        &=\lim_{m\to\infty}\frac{1}{h_m}\left[ (t-\tau_m(t))\Big( \bar{u}(t) + \frac{1}{t-\tau_m(t)}\int_{\tau_m(t)}^t \bar{u}(s) \dd s 
	  -\bar{u}(t)\Big)\right.\\
	&\left. \qquad\qquad +(\tau_m(t)+h_m-t)\Big( \bar{u}(t) + \frac{1}{\tau_m(t)+h_m-t}\int_t^{\tau_m(t)+h_m} \bar{u}(s) \dd s 
	  -\bar{u}(t)\Big)\right]\\
	&=\bar{u}(t) + \lim_{m\to\infty}\left(\frac{t-\tau_m(t)}{h_m}\, o(1) + \frac{\tau_m(t)+h_m -t}{h_m}\, o(1)  \right)\\
	&=\bar{u}(t).
	\end{split}
	\]
	Finally, we observe that the average of $\bar{u}$ is preserved by the discretization.
\end{proof}

Recalling that, as it is well known, all solutions of \eqref{traj00} are Lipschitz with the common Lipschitz constant $L$, 
we now state the main result of the section.
\begin{thm}\label{Theorem1}
Under the assumptions of Section \ref{sec:statement}, let
$(\bar{x},\bar{u})$ be a process for \eqref{traj}. Let $m\in\mathbb{N}$ and let $I_m^i$, $i=0,\ldots ,2^m-1$ be as in Lemma \ref{Lemma1}.
Set also $h_m=\frac{T}{2^m}$. Then there exist sequences 
\begin{equation*}
\{c_m^{ji}\}_{j=1,\ldots,\sigma,\, i=0,\ldots,2^m-1}\subset \mathbb{R},\qquad x_m\colon [0,T]\to X,\qquad r_m\colon [0,T]\to[0,+\infty) \qquad (m\in\mathbb{N}),
\end{equation*}
with the following properties:
\begin{itemize}
\item[a)] $\max_{i=0,\ldots ,2^m-1} | c_m^{ji}-c^j|\le\frac{LT}{2^m}$;  
\item[b)] $x_m(\cdot)$ is continuous and is affine on each $I_m^i$, $i=0,\ldots,2^m-1$, $x_m(0)=\bar{x}(0)$, $x_m(T)=\bar{x}(T)$,
and, for $i=0,\ldots ,2^m-1$,
\begin{equation*}
\begin{split}
\frac{x_m(t_m^{i+1})-x_m(t_m^i)}{h_m} &\in - \strokedint_{I_m^i} \big(g(s,\bar{x}(s),\bar{u}(s))-\dot{\bar{x}}(s)\big)\dd s + g(t_m^i,x_m(t_m^i),u_m(t_m^i))
+ \strokedint_{I_m^i} r_m(s)\dd s\: B_X\\
&\subset -\NN_{C_m^i} (x_m(t_m^i)) + g(t_m^i,x_m(t_m^i),u_m(t_m^i)) + \strokedint_{I_m^i} r_m(s)\dd s\: B_X,
\end{split}
\end{equation*}
where $u_m$ is defined as in \eqref{u_m} and the polyhedra $C_m^i$ will be defined in \eqref{defCm} below;
\item[c)] the sequence of polyhedral valued maps
\begin{equation*}
C_m(t):=\sum_{i=0}^{2^m-1} C_m^{i} \mathds{1}_{I_m^i}(t)
\end{equation*}
converges to $C$ with respect to the Hausdorff metric, uniformly w.r.t.~$t$, 
and $C_m^i$ satisfies PLICQ for all $i=0,\ldots,\sigma$, provided $m$ is large enough;
\item[d)] $r_m\to 0$ a.e.~on $[0,T]$ and $\|r_m\|_\infty$ is bounded uniformly w.r.t.~$m$;
\item[e)] $x_m\to \bar{x}$ strongly in $W^{1,2}([0,T];X)$.
\item[f)] $J(x_m,u_m)\to J(\bar{x},\bar{u})$;
\item[g)] $\int_0^T u_m(t)\dd t =0.$
\end{itemize}
\end{thm}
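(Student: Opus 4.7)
My strategy is constructive: I explicitly define $u_m$, $x_m$, the shifted constants $c_m^{ji}$, and the remainder $r_m$, and then verify the seven properties in turn. Take $u_m$ to be the dyadic averaging approximation of $\bar u$ supplied by Lemma \ref{Lemma1}, which at once yields (g) and the pointwise a.e.~convergence $u_m\to\bar u$. Let $x_m$ be the continuous piecewise-affine function on $[0,T]$ that agrees with $\bar x$ at every node $t_m^i$; then $x_m(0)=\bar x(0)$, $x_m(T)=\bar x(T)$, every $x_m(t_m^i)=\bar x(t_m^i)\in C$, and the incremental ratio equals $\strokedint_{I_m^i}\dot{\bar x}(s)\dd s$. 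For the shifted polyhedra, set $J_m^i:=\bigcup_{s\in I_m^i}\actcon(\bar x(s))$ and define $c_m^{ji}:=\langle x_\ast^j,\bar x(t_m^i)\rangle$ when $j\in J_m^i$ and $c_m^{ji}:=c_j$ otherwise. Finally, on each $I_m^i$ set
\[r_m(s):=\bigl|g(t_m^i,x_m(t_m^i),u_m(t_m^i))-\strokedint_{I_m^i}g(\tau,\bar x(\tau),\bar u(\tau))\dd\tau\bigr|,\]
which is constant on the interval, so that $\strokedint_{I_m^i}r_m\dd s$ equals this very quantity.

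Properties (a), (c) and both inclusions in (b) can then be read off. Property (a) holds by $L$-Lipschitzness of $\bar x$: if $j\in J_m^i$ there exists $s\in I_m^i$ with $\langle x_\ast^j,\bar x(s)\rangle=c_j$, whence $|c_m^{ji}-c_j|\le Lh_m$. This also gives uniform Hausdorff convergence $C_m\to C$; since PLICQ is equivalent to $\inter C\neq\emptyset$, which is an open condition on the offsets, it is preserved for large $m$, yielding (c). The first inclusion in (b) is a tautology: after writing $\strokedint_{I_m^i}\dot{\bar x}=-\strokedint_{I_m^i}(g-\dot{\bar x})+\strokedint_{I_m^i}g$, the residual $g(t_m^i,x_m(t_m^i),u_m(t_m^i))-\strokedint_{I_m^i}g$ is absorbed into $\strokedint_{I_m^i}r_m\,B_X$ by the very definition of $r_m$. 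The second inclusion is the heart of the construction: a.e.~on $I_m^i$ the reaction $g(s,\bar x(s),\bar u(s))-\dot{\bar x}(s)\in\NN_C(\bar x(s))$ is a nonnegative combination of $\{x_\ast^j:j\in\actcon(\bar x(s))\}\subseteq\{x_\ast^j:j\in J_m^i\}$, and by the choice of $c_m^{ji}$ every index in $J_m^i$ is active for $C_m^i$ at $x_m(t_m^i)$; therefore the average $\strokedint_{I_m^i}(g-\dot{\bar x})\dd s$ is a nonnegative combination of normals active at $x_m(t_m^i)$, hence lies in $\NN_{C_m^i}(x_m(t_m^i))$, as required.

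The convergences (d)--(f) use standard ingredients save for one subtle point. Property (e) follows from $\dot x_m(t)=\strokedint_{I_m(t)}\dot{\bar x}\to\dot{\bar x}(t)$ a.e.~(Lebesgue differentiation), uniform boundedness by $L$, and dominated convergence; uniform convergence $x_m\to\bar x$ is immediate from the common Lipschitz constant and agreement at nodes of spacing $h_m$. Property (f) follows by dominated convergence from the a.e.~convergence of the integrand of $J(x_m,u_m)$ to that of $J(\bar x,\bar u)$, which combines continuity of $g$ in $(x,u)$ pointwise in $t$, Lipschitzness of $f_1$, continuity of $f_2$ in $u$, and the a.e.~limits just established. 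The uniform bound $\|r_m\|_\infty\le 2L$ is immediate from $|g|\le L$. The main obstacle is the a.e.~convergence $r_m\to 0$ in (d): since $x_m(t_m^i)=\bar x(t_m^i)$, it reduces to showing that
\[g(\tau_m(t),\bar x(\tau_m(t)),u_m(\tau_m(t)))-\strokedint_{I_m(t)}g(\tau,\bar x(\tau),\bar u(\tau))\dd\tau\to 0\quad\text{a.e.,}\]
where $\tau_m(t)$ denotes the left endpoint of the dyadic interval containing $t$. The integral average tends to $g(t,\bar x(t),\bar u(t))$ at every Lebesgue point of $\tau\mapsto g(\tau,\bar x(\tau),\bar u(\tau))$, but sampling $g$ at the specific point $\tau_m(t)$ is delicate because $g$ is only Carath\'eodory in $t$, not continuous. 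The natural tool to overcome this is a Scorza-Dragoni reduction: for each $k$ there is a closed $K_k\subset[0,T]$ with $|[0,T]\setminus K_k|<1/k$ on which $g$ is jointly continuous in $(t,x,u)$; combining the Lebesgue density theorem for $K_k$ with the a.e.~convergence of $u_m$ and a diagonal extraction recovers the claim on a full-measure subset of $\bigcup_k K_k$. This is the only genuine technical difficulty; the remaining verifications are comparatively routine.
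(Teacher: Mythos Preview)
Your construction coincides with the paper's: the same $u_m$ from Lemma~\ref{Lemma1}, the same $x_m$ as the piecewise-affine interpolant of $\bar x$ at the dyadic nodes (so $x_m(t_m^i)=\bar x(t_m^i)\in C$), and the same shifted offsets $c_m^{ji}$ (your set $J_m^i$ is exactly the paper's case distinction ``$\langle x_\ast^j,\bar x(t)\rangle<c^j$ for all $t\in I_m^i$'' versus not). The verifications of (a), (c), (e), (f), (g), and both inclusions in (b) are the same as in the paper.

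On (d) you are actually more scrupulous than the paper. The paper disposes of the remainder by a one-line appeal to ``uniform continuity of $g$ w.r.t.\ $x$ and $u$'' after a splitting that in fact also changes the time argument $s\mapsto\tau_m(t)$; it does not address the measurability-only hypothesis on $t\mapsto g(t,x,u)$. Your Scorza--Dragoni instinct is the right one, but be aware that the sketch as written has a loose end: the sampled point $\tau_m(t)$ is a fixed dyadic rational and need not lie in the Lusin set $K_k$, even when $t$ is a density point of $K_k$, so joint continuity of $g$ on $K_k\times C\times U$ does not directly control $g(\tau_m(t),\cdot,\cdot)$. One clean way around this is to first use the Lipschitz bound in $(x,u)$ to reduce to the scalar question for $h(t):=g(t,\bar x(t),\bar u(t))$, and then observe that the dyadic step-function sample $h(\tau_m(\cdot))$ converges to $h$ in $L^1$ (standard density of step functions), which is what is actually used downstream; pointwise a.e.\ convergence along the full sequence is in general not available for merely measurable $h$. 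This is a wrinkle the paper also leaves open.
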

\begin{proof}
Fix $m\in \mathbb{N}$ and define, for $i=0,\ldots ,2^m-1$ and $t\in I_m^i$,
\begin{equation*}
\begin{split}
\omega_m^i &:= \frac{\bar{x}(t_m^{i+1})-\bar{x}(t_m^i)}{h_m}\\
x_m(t)&:= \bar{x}(t_m^i)+(t-t_m^i)\omega_m^i\\
&=\bar{x}(t_m^i)+(t-t_m^i)\strokedint_{I_m^i} \dot{\bar{x}}(s)\dd s.
\end{split}
\end{equation*}
Observe that, for each $i-0,\ldots ,2^m$, $x_m(t_m^i)=\bar{x}(t_m^i)\in C$, and, by Lemma \ref{Lemma1}, $x_m\to\bar{x}$ strongly in $W^{1,2}([0,T];X)$. 
Define 
\begin{equation*}
\omega_m(t) := \sum_{i=0}^{2^m-1} \omega_m^i \mathds{1}_{I_m^i}(t),\quad t\in [0,T].
\end{equation*}
Fix $i=0,\ldots , 2^m-1$ and set, for $j=1,\ldots \sigma$,
\begin{equation*}
c_m^{ji}:=
\begin{cases}
c^j & \text{ if $\langle \bar{x}(t),x_\ast^j\rangle < c^j$ for all $t\in I_m^i$}\\
\langle \bar{x} (t_m^i),x_\ast^j\rangle & \text{ otherwise,} 
\end{cases}
\end{equation*}
and
\begin{equation*}
c_m^j(t) = \sum_{i=0}^{2^m-1} c_m^{ji} \mathds{1}_{I_m^i} (t). 
\end{equation*}
We claim that $c_m^j(t)\to c^j$ uniformly on $[0,T]$ as $m\to\infty$. Indeed, set for $t\in [0,T]$
\begin{equation*}
\tau_m (t):=\max\{ t_m^i : i=0,\ldots ,2^m-1,t_m^i\le t\}\:\text{ and }\: I_m(t) = [\tau_m(t),\tau_m(t) + h_m),
\end{equation*}
and fix $j=1,\ldots ,\sigma$ and $\bar{t}\in [0,T]$. If $\langle \bar{x}(\bar{t}),x_\ast^j\rangle < c^j$, then
eventually $\langle \bar{x}(t),x_\ast^j\rangle < c^j$ for all $t\in [\tau_m (\bar{t}),\tau_m(\bar{t}) + h_m)$,
so that, eventually, $c_m^j(\bar{t}) = c^j$. Let now $\langle \bar{x}(\bar{t}),x_\ast^j\rangle = c^j$. Then
$c_m^j(\bar{t})$, that is equal to $c_m^j(\tau_m(\bar{t}))$, satisfies the conditions
\begin{equation*}
\langle \bar{x}(\tau_m(\bar{t})),x_\ast^j\rangle = c_m^j (\tau_m(\bar{t}))\le \langle \bar{x}(\bar{t}),x_\ast^j\rangle = c^j, 
\end{equation*}
where both the equality and the inequality follow from our definition of $c_m^j(t)$. Then
\begin{equation*}
\big| c_j - c_m^j (\tau_m(\bar{t}))  \big| = \big| \langle \bar{x}(\bar{t})-\bar{x}(\tau_m),x_\ast^j\rangle  \big| \le Lh_m.
\end{equation*}
This in turn implies a).

Set now
\begin{equation}\label{defzm}
g_m(t):=g(\tau_m(t),\bar{x}(\tau_m(t)),u_m(t))\quad\text{ and }\quad \zeta_m(t) = g_m(t)-\omega_m(t),\quad t\in [0,T]. 
\end{equation}
We recall that, for a.e.~$t\in [0,T]$
\begin{equation*}
g(t,\bar{x}(t),\bar{u}(t))-\dot{\bar{x}}(t) = \sum_{j=1}^\sigma \lambda_j (t) x_\ast^j 
\end{equation*}
for suitable measurable $\lambda_j(\cdot)$. Moreover we can take $\lambda_j(\cdot)\ge 0$, and such that $\lambda_j(t)=0$ for each $t$ for which 
$\langle \bar{x}(t),x_\ast^j\rangle < c^j$, namely $j\notin\actcon(\bar x(t))$. Observe that
\begin{equation}\label{++}
\begin{split}
\zeta_m(t) & = \strokedint_{[\tau_m(t),\tau_m(t)+h_m)} \big(g(s,\bar{x}(s),\bar{u}(s))-\dot{\bar{x}}(s)\big)\dd s - 
                  \strokedint_{[\tau_m(t),\tau_m(t)+h_m)} \big(g(s,\bar{x}(s),\bar{u}(s))-g_m(s)\big)\dd s\\
&= \sum_{j=1}^\sigma x_\ast^j\strokedint_{[\tau_m(t),\tau_m(t)+h_m)} \lambda_j(s)\dd s  - 
                  \strokedint_{[\tau_m(t),\tau_m(t)+h_m)} \big(g(s,\bar{x}(s),\bar{u}(s))-g_m(s)\big)\dd s.
\end{split}
\end{equation}
Define now, for $j=1,\ldots,\sigma$, $i=0,\ldots,2^m-1$,
\begin{equation}\label{defCm}
C_m^{ji}:=\left\{ x\in X : \langle x,x_\ast^j\rangle \le c_m^{ji} \right\},\quad C_m^i=\bigcap_{j=1}^\sigma C_m^{ji},
\quad C_m^j(t):=\sum_{i=0}^{2^m-1} C_m^{ji} \mathds{1}_{I_m^i}(t)
\end{equation}
and observe that, by our construction,
\begin{equation*}
\sum_{j=1}^\sigma x_\ast^j\strokedint_{I_m(t)} \lambda_j(s)\dd s  \in \NN_{C_m^j(\tau_m(t))} (x_m(\tau_m(t)) = \NN_{C_m^j(\tau_m(t))} (x_m(t))
\end{equation*}
for a.e.~$t$. Moreover, for all $t\in [0,T]$,
\begin{multline*}
\strokedint_{I_m(t)} \big(g(s,\bar{x}(s),\bar{u}(s))-g_m(s)\big)\dd s = 
      \strokedint_{I_m(t)} \big(g(s,\bar{x}(s),\bar{u}(s))-g(\tau_m(s),\bar{x}(\tau_m(t)),\bar{u}(s))\big)\dd s\\
\qquad + \strokedint_{I_m(t)} \big(g(\tau_m(s),\bar{x}(\tau_m(t)),\bar{u}(s))-g_m(s)\big)\dd s,
\end{multline*}
where we have used the fact that, for $s\in I_m(t)$, $\tau_m(s)=\tau_m(t)$. 
Now, by the uniform continuity of $g$
w.r.t.~$x$ and $u$ in the domain of interest the proof of b) is concluded.

The remaining claims are immediate consequences of the construction and of Lemma \ref{Lemma1}, taking into account that $\|\dot{z}_m\|_\infty$
is bounded uniformly w.r.t.~$m$.  In particular, the Hausdorff convergence in item c) follows from the fact that active constraints
are eventually constant, while statement f) follows from the pointwise a.e. convergence of the sequences
$\dot{z}_m$ and $u_m$. Moreover, $u_m$ has zero mean by construction.
\end{proof}

\section{A discrete optimization problem}\label{sec:discopt}
Let $(\bar{x},\bar{u})$ be a local $W^{1,2}$-optimal process for problem (P), i.e., there exists 
$\bar{\varepsilon}>0$ such that for all processes $(x,u)$ of \eqref{traj} with 
$\| x-\bar{x}\|_{W^{1,2}([0,T];X)}+\|u-\bar{u}\|_{L^2([0,T];X)}<\bar{\varepsilon}$
one has $J(x,u)\le J(\bar{x},\bar{u})$. Let $\{ (x_m,u_m)\}$ be the sequence of approximations of $(\bar{x},\bar{u})$ constructed according to
Lemma \ref{Lemma1} and Theorem \ref{Theorem1} and set, for $m\in\mathbb{N}$,
\[
\alpha_m = \sum_{i=0}^{2^m-1} \int_{I_m^i}\left( \left|\frac{x_m^{i+1}-x_m^i}{h_m}-\dot{\bar{x}}(t) \right|^2+\left|u_m^i-\bar{u}(t) \right|^2 \right)\dd t. 
\]
By Theorem 5.2, $\alpha_m\to 0$ as $m\to\infty$. Set
\begin{equation}\label{menodue}
\kappa_m=\frac{1}{\sqrt{\alpha_m}}. 
\end{equation}
Consider the following family of finite dimensional optimization problems ($P_m$):

\smallskip
\noindent\textbf{Problem \textup{($P_m$)}} \emph{
Let $\bar{\varepsilon}$ be given by the definition of local $W^{1,2}$ optimal process;
let $m\in \mathbb{N}$ be given; let $I_m^i$, $i=0,\ldots,2^m-1$, and $u_m$ be as in Lemma \ref{Lemma1}; 
let $r_m(\cdot)$ and $c_m^{ji}$, $j=1,\ldots,\sigma$, $i=0,\ldots ,2^m-1$, be as in Theorem \ref{Theorem1}, 
and set $c_m^{j2^m}:=c_m^{j1}$, $j=1,\ldots,\sigma$. 
Writing $(z_m;w_m;\rho_m)=(z_m^0,\ldots,z_m^{2^m};w_m^0,\ldots ,w_m^{2^m-1};$ $\rho_m^0,\ldots,\rho_m^{2^m-1})$, we want to
\begin{equation*}
\begin{split}
\text{maximize }\; J_m(z_m,w_m)
      &:=h_m\sum_{i=0}^{2^m-1}\left( f_1 \Big(g(t_m^i,z_m^i,w_m^i)-\frac{z_m^{i+1}-z_m^i}{h_m} \Big)-f_2(t_m^i,w_m^i)\right)\\
      &\qquad\quad -\frac{\kappa_m}{2}\left(\big|z_m^0-\bar{x}(0)\big|^2+
	\sum_{i=0}^{2^m-1}\int_{I_m^i} \left( \left| \frac{z_m^{i+1}-z_m^i}{h_m}-\dot{\bar{x}}(t) \right|^2 + 
| w_m^i-\bar{u}(t)|^2 \right)\dd t\right)
\end{split}
\end{equation*}
over discrete processes $(z_m,w_m,\rho_m)$ such that
\begin{align}
\tag{$m_1$}
&\langle x_\ast^j,z_m^i\rangle \le c_m^{ji}, \;i=1,\ldots,2^m, \;j=1,\ldots,\sigma;\\
\tag{$m_2$}
&\frac{z_m^{i+1}-z_m^i}{h_m}= - \strokedint_{I_m^i} \big(g(s,\bar{x}(s),\bar{u}(s))-\dot{\bar{x}}(s)\big)\dd s + g(t_m^i,z_m^i,w_m^i)\\ 
 &    \quad\qquad \qquad \qquad  +  \rho_m^i\, \strokedint_{I_m^i} r_m(s)\dd s, \qquad i=0,\ldots,2^m-1;\nonumber\\
\tag{$m_3$} 
& z_m^{2^m}=z_m^0;\\
\tag{$m_4$}&\big|z_m^0-\bar{x}(0)\big|^2+
 \sum_{i=0}^{2^m-1} \int_{I_m^i} \left( \left| \frac{z_m^{i+1}-z_m^i}{h_m}-\dot{\bar{x}}(t) \right|^2 + | w_m^i-\bar{u}(t)|^2 \right)\dd t\le
\frac{\bar{\varepsilon}}{2};\\
\tag{$m_5$} 
&\sum_{i=0}^{2^m-1} w_m^i =0;\\
\tag{$m_6$} 
& w_m^i\in U,\; i=0,\ldots,2^m-1;\\
\tag{$m_7$}
&\rho_m^i\in B_X,\; i=0,\ldots,2^m-1.
\end{align}}
By standard finite dimensional programming arguments, problem ($P_m$) admits optimal processes.

The following result is pivotal in the method of discrete approximations. It is similar to, e.g., Theorem 4.3 in \cite{gmd}, 
with a difference: we do not assume relaxation stability. Indeed our method allows to treat a non-concave integral functional, as the functional that
appears in Sections 3.4 and 4, without passing through the relaxed problem.
\begin{thm}\label{Theorem4.2}
Let  the assumptions of Theorem \ref{nec} hold. Let $(\bar{x},\bar{u})$ be a $W^{1,2}$-optimal process for Problem ($P$) 
and let $(\bar{x}_m,\bar{u}_m)$ be optimal processes for Problems ($P_m$).
With an abuse of notation, consider $\bar{x}_m$, resp. $\bar{u}_m$, as piecewise affinely, resp. piecewise constantly, extended to the whole of $[0,T]$.
Then
\begin{equation*}
\begin{split}
&\bar{x}_m \to \bar{x}\;\text{ strongly in }\; W^{1,2}([0,T];X)\\
&\bar{u}_m \to \bar{u}\;\text{ strongly in }\; L^2([0,T];\mathbb{R}^d).
\end{split}
\end{equation*}
More precisely,
\begin{equation}\label{menouno}
\lim_{m\to\infty} \kappa_m \left(\big|\bar{x}_m^0-\bar{x}(0)\big|^2+
	\sum_{i=0}^{2^m-1}\int_{I_m^i} \left( \left| \dot{\bar{x}}_m(t)-\dot{\bar{x}}(t) \right|^2 + 
| \bar{u}_m(t)-\bar{u}(t)|^2 \right)\dd t\right)=0.
\end{equation}
Consequently, $J_m(\bar{x}_m,\bar{u}_m)\to J(\bar{x},\bar{u})$.
\end{thm}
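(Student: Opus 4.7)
The plan is to compare the optimal value $J_m(\bar{x}_m,\bar{u}_m)$ with the value at the reference discretization $(x_m,u_m)$ supplied by Theorem \ref{Theorem1}. I first would verify that $(x_m,u_m)$, equipped with the residuals $\rho_m^i$ from item b) of that theorem, is feasible for $(P_m)$ once $m$ is large: $(m_1)$ follows from the construction of the $c_m^{ji}$; $(m_2)$ and $(m_7)$ are exactly item b); $(m_3)$ reflects $\bar{x}(0)=\bar{x}(T)$; $(m_5)$ is item g); $(m_6)$ uses the convexity of $U$ and the averaging definition of $u_m$; and $(m_4)$ holds eventually since the penalty evaluated at $(x_m,u_m)$ is exactly $\alpha_m\to 0$. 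The calibration $\kappa_m=1/\sqrt{\alpha_m}$ then ensures $\kappa_m\alpha_m=\sqrt{\alpha_m}\to 0$ while $\kappa_m\to\infty$, and this disparity drives the whole argument.

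Writing $\Phi_m(z,w)$ for the integral-like part of the discrete cost and $\Psi_m(z,w)$ for the penalty expression (so that $J_m=\Phi_m-\tfrac{\kappa_m}{2}\Psi_m$), the key consequence of optimality is
\[
\tfrac{\kappa_m}{2}\Psi_m(\bar{x}_m,\bar{u}_m)\le \Phi_m(\bar{x}_m,\bar{u}_m)-\Phi_m(x_m,u_m)+\tfrac{\kappa_m}{2}\alpha_m.
\]
To exploit this, I would bound $\Phi_m(\bar{x}_m,\bar{u}_m)$ from above by an $m$-independent constant: $(m_6)$ and $(H_U)$ give $\|\bar{u}_m\|_\infty\le\max_{u\in U}|u|$, while $(m_2)$ together with $(H_g)$ and item d) of Theorem \ref{Theorem1} produces a uniform $L^\infty$ bound on $\dot{\bar{x}}_m$; the Lipschitz character of $f_1$ and the continuity of $f_2$ on $[0,T]\times U$ then furnish the desired majorant. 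Since $\kappa_m\alpha_m\to 0$ and $\Phi_m(x_m,u_m)\to J(\bar{x},\bar{u})$ by item f) of Theorem \ref{Theorem1} (up to a Riemann-sum approximation error that vanishes with $h_m$), the right-hand side above is $O(1)$, so $\Psi_m(\bar{x}_m,\bar{u}_m)\le C/\kappa_m\to 0$. Unpacking $\Psi_m$ gives $\bar{x}_m(0)\to\bar{x}(0)$, $\dot{\bar{x}}_m\to\dot{\bar{x}}$ in $L^2$, and $\bar{u}_m\to\bar{u}$ in $L^2$, which is the asserted strong convergence.

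For the refined bound \eqref{menouno}, I would bootstrap. Along a subsequence, the $L^2$ convergences above become pointwise a.e., while $\bar{x}_m\to\bar{x}$ uniformly since the $\bar{x}_m$ are uniformly Lipschitz. Rewriting $\Phi_m(\bar{x}_m,\bar{u}_m)$ as an integral in $t$ via the piecewise extensions and invoking dominated convergence --- justified by the $L^\infty$ bounds, the uniform convergence $\tau_m(t)\to t$, and the continuity of $g,f_1,f_2$ --- yields $\Phi_m(\bar{x}_m,\bar{u}_m)\to J(\bar{x},\bar{u})$. Substituting back into the displayed inequality forces $\tfrac{\kappa_m}{2}\Psi_m(\bar{x}_m,\bar{u}_m)=o(1)$, which is precisely \eqref{menouno}, and then $J_m(\bar{x}_m,\bar{u}_m)=\Phi_m(\bar{x}_m,\bar{u}_m)-\tfrac{\kappa_m}{2}\Psi_m(\bar{x}_m,\bar{u}_m)\to J(\bar{x},\bar{u})$ follows. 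The principal obstacle --- and the reason for the two-step structure --- is that the non-concavity of $f_1$ rules out any weak semicontinuity shortcut, so passing to the limit inside $\Phi_m$ genuinely requires the strong $L^2$ convergence of $\dot{\bar{x}}_m$, which is supplied for free by the penalty once the first step has been carried out.
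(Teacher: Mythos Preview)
Your proposal is correct and follows essentially the same approach as the paper: compare the optimal discrete process $(\bar{x}_m,\bar{u}_m)$ with the feasible reference discretization $(x_m,u_m)$ from Theorem~\ref{Theorem1}, exploit the calibration $\kappa_m\alpha_m=\sqrt{\alpha_m}\to 0$, use the uniform $L^\infty$ bounds on $\dot{\bar{x}}_m,\bar{u}_m$ (from $(m_2)$, $(m_6)$, $(H_g)$ and item d)) to control $\Phi_m$, and then pass to the limit in $\Phi_m(\bar{x}_m,\bar{u}_m)$ via the strong convergence just obtained.

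The only organizational difference is that the paper frames the argument by contradiction (assuming $\kappa_m\Psi_m(\bar{x}_m,\bar{u}_m)\to\gamma\in(0,+\infty]$ along a subsequence and deriving $J(\bar{x},\bar{u})-\gamma/2\ge J(\bar{x},\bar{u})$), whereas you proceed directly with a two-step bootstrap. The substance is identical. One minor point worth tightening: in your second step you pass to a subsequence for pointwise a.e.\ convergence; to recover \eqref{menouno} for the full sequence you should remark that the first step already gives $\Psi_m\to 0$ along the full sequence, so the usual ``every subsequence has a further subsequence'' argument upgrades $\Phi_m(\bar{x}_m,\bar{u}_m)\to J(\bar{x},\bar{u})$ and hence \eqref{menouno} to the full sequence.
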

\begin{proof}
Suppose by contradiction that, possibly along a subsequence,
\begin{equation}\label{zero}
\lim_{m\to\infty} \kappa_m \left(\big|\bar{x}_m^0-\bar{x}(0)\big|^2+
	\sum_{i=0}^{2^m-1}\int_{I_m^i} \left( \left| \dot{\bar{x}}_m(t)-\dot{\bar{x}}(t) \right|^2 + 
| \bar{u}_m(t)-\bar{u}(t)|^2 \right)\dd t\right)=\gamma\in (0,+\infty]. 
\end{equation}
Thanks to ($m_4$), the sequence $\{ ( \bar{x}_m,\bar{u}_m)\}$ is bounded in $W^{1,2}([0,T];X)\times L^2(0,T;\mathbb{R}^d)$, so that,
up to taking a subsequence, it converges to some $(\tilde{x},\tilde{u})$ weakly in the same space.

By our definition of $\kappa_m$,
\begin{equation}\label{uno}
\lim_{m\to\infty} \kappa_m \left(\big|x_m^0-\bar{x}(0)\big|^2+
	\sum_{i=0}^{2^m-1}\int_{I_m^i} \left( \left| \dot{x}_m(t)-\dot{\bar{x}}(t) \right|^2 + 
| u_m(t)-\bar{u}(t)|^2 \right)\dd t\right)=0 ,
\end{equation}
where we recall that $(x_m,u_m)$ is the sequence of approximations of $(\bar{x},\bar{u})$ constructed in Lemma \ref{Lemma1} and in Theorem \ref{Theorem1},
hence, in particular, $x_m^0=\bar{x}(0)$. Since $(\bar{x}_m,\bar{u}_m)$ is an optimal process for ($P_m$),
\begin{equation}\label{due}
J_m(\bar{x}_m,\bar{u}_m)\ge J_m(x_m,u_m)\quad\forall m\in \mathbb{N}. 
\end{equation}
By \eqref{uno} and by strong convergence (see Theorem \ref{Theorem1}),
\begin{equation}\label{tre}
J_m(x_m,u_m)\to J(\bar{x},\bar{u})\;\text{ as $m\to\infty$.} 
\end{equation}
Moreover, by ($m_2$), ($m_4$) and our assumptions, the sequence
\[
h_m \sum_{i=0}^{2^m-1} \left( f_1 \Big(g(t_m^i,\bar{x}_m^i,\bar{u}_m^i)-\frac{\bar{x}_m^{i+1}-\bar{x}_m^i}{h_m} \Big)-f_2(t_m^i,\bar{u}_m^i)\right)
\]
is uniformly bounded, so that the sequence
\[
J_m(\bar{x}_m,\bar{u}_m)+\frac{\kappa_m}{2}\left(|\bar{x}_m(0)-\bar{x}(0)|^2+\int_0^T \left( \left|\dot{\bar{x}}_m(t)-\dot{\bar{x}}(t) \right|^2
+\left| \bar{u}_m(t)-\bar{u}(t)\right|^2 \right)\dd t \right)
\]
is uniformly bounded from above. Moreover, thanks to \eqref{due} and \eqref{tre}, the same sequence is also uniformly bounded from below.
This implies in turn that the sequence
\[
\kappa_m \left(\big|\bar{x}_m^0-\bar{x}(0)\big|^2+
	\sum_{i=0}^{2^m-1}\int_{I_m^i} \left( \left| \dot{\bar{x}}_m(t)-\dot{\bar{x}}(t) \right|^2 + 
| \bar{u}_m(t)-\bar{u}(t)|^2 \right)\dd t\right)
\]
is uniformly bounded, so that, in particular, $\gamma < +\infty$.
As a consequence, $(\tilde{x},\tilde{u})=(\bar{x},\bar{u})$, and the convergence $(\bar{x}_m,\bar{u}_m)\to (\bar{x},\bar{u})$ is indeed strong
in $W^{1,2}([0,T];X)\times L^2(0,T;\mathbb{R}^d)$. Thanks to this fact, 
\[
\lim_{m\to\infty} J_m(\bar{x}_m,\bar{u}_m) = J(\bar{x},\bar{u}). 
\]
Therefore, \eqref{zero} implies that
\[
J(\bar{x},\bar{u})-\frac{\gamma}{2} \ge J(\bar{x},\bar{u}), 
\]
and this contradiction completes the proof.

\end{proof}

\section{Necessary conditions for the discrete approximate problem}\label{sec:discnec}
Throughout this section, the assumptions of Theorem \ref{nec} are supposed to hold.

\medbreak

Fix $m\ge 1$. In order to proceed with deriving necessary optimality conditions for problems ($P_m$), we will introduce some further notations.
We set 
\begin{equation*}
\mathscr{X} := (x_m^0,\ldots,x_m^{2^m-1};w_m^0,\ldots,w_m^{2^m-1}; \rho_m^0,\ldots,\rho_m^{2^m-1};\Delta_m^0,\ldots,\Delta_m^{2^m-1})\in 
X^{2^m}\times \mathbb{R}^{2^m d}\times X^{2^m}\times X^{2^m}.
\end{equation*}
With the understanding that $\Delta_m^i=\frac{x_m^{i+1}-x_m^i}{h_m}$, we will write the functional $J_m$ as depending on $\mathscr{X}$.

We define furthermore the following 
maps, for $i=0,\ldots,2^m-1$: for $x\in X$, $w\in U$, and $\rho\in B_X$, we set
\begin{equation*}
\Gamma_m^i(x,w,\rho):=  -\strokedint_{I_m^i}\big(g(s,\bar{x}(s),\bar{u}(s))-\dot{\bar{x}}(s)\big)\dd s + g(t_m^i,x,w) +
\rho \strokedint_{I_m^i}r_m(s)\dd s \;\;(\in X).
\end{equation*}
The computation of necessary conditions for ($P_m$) will be carried out in two steps. In the first step we will show how to set ($P_m$) in the
framework of classical results on finite dimensional optimization, while in the second one the calculations for this particular case will be
carried out.
\begin{thm}\label{Theorem6.1}
Let $\bar{\mathscr{X}}=(\bar{x}_m,\bar{w}_m,\bar{\rho}_m,\bar{\Delta}_m)$ be an optimal process for ($P_m$). 
Then there exist $\lambda_m >0$, $\omega_m \in \mathbb{R}^d$, $\psi_m^i\in \mathbb{R}^d$, $\beta_m^i,\, \eta_m^i\in X$,
$\xi_m^i= (\xi_m^{i1},\ldots,\xi_m^{i\sigma})\in \mathbb{R}_+^\sigma$, $p_m^i \in X$, and
$\mathscr{X}_m^\ast\in X^{2^m}\times \mathbb{R}^{2^md}\times X^{2^m}\times X^{2^m}$, $i=0,\ldots,2^m$ such that

\begin{equation}\label{necd1}
\xi_m^{ij} \,\big(\langle x_\ast^j,\bar{x}_m^i\rangle - c_m^{ij}\big)=0,\; i=0,\ldots ,2^{m-1},\; j=1,\ldots,\sigma, 
\end{equation}
and, for $i=0,\ldots ,2^m-1$,
\begin{multline}\label{necd2}
 \Bigg(\kappa_m^i(\bar{x}(0)-\bar{x}_m^i)- \lambda_m D_x g(t_m^i,\bar{x}_m^i,\bar{w}_m^i)^\ast\eta_m^i
    -\sum_{j=1}^\sigma \frac{\xi_m^{ij}}{h_m} x_\ast^j+\frac{p_m^{i+1}-p_m^i}{h_m},\\
    -\lambda_m \big(D_w g(t_m^i,\bar{x}_m^i,\bar{w}_m^i)^\ast\eta_m^i+\kappa_m(\bar{w}_m^i-u_m^i)+D_w f_2(t_m^i,w_m^i)\big)
 -  \frac{\omega_m}{h_m}-\frac{\psi_m^i}{h_m},\\
 -\frac{\beta_m^i}{h_m},p_m^i+\lambda_m\bigg(\eta_m^i+
    \kappa_m\Big(\frac{\bar{x}_m^{i+1}-\bar{x}_m^i}{h_m}-\strokedint_{I_m^i}\dot{\bar{x}}(t)\dd t \Big)
   \bigg)  \Bigg) \in \NN_{\mathrm{graph}\, (\Gamma_m^i)}(\bar{\mathscr{X}}),
\end{multline}
where
\begin{equation}\label{7.2.5}
\kappa_m^i=\kappa_m\;\text{ for $i=0$ and $\kappa_m^i=0$ for $i\neq 0$, where $\kappa_m$ is defined in \eqref{menodue},}
\end{equation}
\begin{equation}\label{per_p}
p_m^{2^m}=p_m^0,
\end{equation}
and
\begin{equation}\label{subgrs}
\psi_m^i\in \NN_U(\bar{w}_m^i),\quad \beta_m^i\in \NN_{B_X}(\bar{\rho}_m^i),\quad 
\eta_m^i\in\partial f_1 \left(g(t_m^i,\bar{x}_m^i,\bar{w}_m^i)-\frac{\bar{x}_m^{i+1}-\bar{x}_m^i}{h_m}\right).
\end{equation}
\end{thm}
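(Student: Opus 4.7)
\textbf{Proof plan for Theorem \ref{Theorem6.1}.} The approach is to view $(P_m)$ as a nonsmooth finite-dimensional mathematical program in the variable $\mathscr{X}$ and to apply the generalized Lagrange multiplier rule in terms of limiting (Mordukhovich) normals and subdifferentials (as in \cite{BM}). Treating $\Delta_m^i$ as independent variables tied to $x_m^i,x_m^{i+1}$ via the affine equality $\Delta_m^i=(x_m^{i+1}-x_m^i)/h_m$, I would rewrite $(P_m)$ as the minimization of $-J_m(\mathscr{X})$ subject to $(m_1)$, the graph-membership encoding of $(m_2)$, namely $(x_m^i,w_m^i,\rho_m^i,\Delta_m^i)\in\mathrm{graph}\,\Gamma_m^i$, the linear equality defining $\Delta_m^i$, the periodicity $(m_3)$, the zero-mean $(m_5)$, and the set-valued constraints $(m_6)$, $(m_7)$. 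By Theorem \ref{Theorem4.2} the neighborhood constraint $(m_4)$ is \emph{strictly} satisfied at the optimizer for $m$ large, so its limiting normal cone there is trivial and it contributes nothing to the multipliers.

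Applying the generalized Fermat rule, then using the sum rule for limiting subdifferentials and normal cones (valid in this polyhedral plus smooth plus $C^1$ setting), produces a scalar $\lambda_m\ge 0$ attached to the cost together with multipliers for each feasibility ingredient, not all zero, whose combination equates a selected subgradient of the Lagrangian to zero. The coercive quadratic penalty $\kappa_m$ inside $J_m$ is what rules out $\lambda_m=0$: normalizing so that $\lambda_m>0$ (in fact $\lambda_m=1$) remains consistent because the penalty dominates the rest of the cost near $(\bar{x}_m,\bar{u}_m)$, which prevents the degenerate situation in which all multipliers would come from the constraints alone.

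Next I would identify each multiplier with its constraint. The polyhedral constraint $(m_1)$ yields $\xi_m^{ij}\ge 0$ with complementary slackness, i.e.\ \eqref{necd1}. The limiting normal cone to $\mathrm{graph}\,\Gamma_m^i$ at $\bar{\mathscr{X}}$ produces the four-component vector appearing in \eqref{necd2}; its slot dual to $\Delta_m^i$ is precisely the adjoint $p_m^i$ generated by the equality $\Delta_m^i=(x_m^{i+1}-x_m^i)/h_m$. Eliminating this multiplier between consecutive indices telescopes to the finite-difference $(p_m^{i+1}-p_m^i)/h_m$ in the $x$-slot of \eqref{necd2}, and the periodicity $(m_3)$ forces the boundary identification \eqref{per_p}. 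The zero-mean constraint $(m_5)$ contributes a single vector $\omega_m\in\mathbb{R}^d$ that enters every $w$-slot uniformly, while $(m_6)$ and $(m_7)$ directly produce the inclusions $\psi_m^i\in\NN_U(\bar{w}_m^i)$ and $\beta_m^i\in\NN_{B_X}(\bar{\rho}_m^i)$ of \eqref{subgrs}. The nonsmooth part $h_m\sum_i f_1(g(t_m^i,x_m^i,w_m^i)-\Delta_m^i)$ is handled by the chain rule $\partial(f_1\circ A_i)\subset A_i^{\ast}\,\partial f_1$ for the smooth mapping $A_i(x,w,\Delta)=g(t_m^i,x,w)-\Delta$, producing $\eta_m^i\in\partial f_1(\cdot)$ and the terms $-\lambda_m D_xg^{\ast}\eta_m^i$, $-\lambda_m D_wg^{\ast}\eta_m^i$ and $+\lambda_m\eta_m^i$ in the $x$-, $w$- and $\Delta$-slots. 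Finally the smooth gradient of the quadratic penalty contributes $\kappa_m^i(\bar{x}(0)-\bar{x}_m^i)$ (nonzero only for $i=0$, explaining \eqref{7.2.5}), $\lambda_m\kappa_m(\bar{w}_m^i-u_m^i)$, and the difference-quotient term in the $\Delta$-slot.

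The main obstacle I expect is the index bookkeeping for the adjoint: one must assemble the separate normal-cone inclusions coming from each $\mathrm{graph}\,\Gamma_m^i$ and from each affine equation $\Delta_m^i=(x_m^{i+1}-x_m^i)/h_m$ into a single system in which $p_m^i$ appears as a discrete BV-like adjoint, and the periodic identification $(m_3)$ has to thread through this assembly so that \eqref{per_p} emerges without spurious boundary contributions. A secondary subtlety, already flagged above, is justifying $\lambda_m>0$ rather than only $\lambda_m\ge 0$; this relies crucially on the presence of the $\kappa_m$-penalty in the cost, together with $(m_4)$ being inactive, which together guarantee that the cost-slot of the Lagrangian cannot be zeroed out by constraint multipliers alone.
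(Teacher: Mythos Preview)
Your plan is correct and follows essentially the same route as the paper: recast $(P_m)$ as a finite-dimensional nonsmooth program in the augmented variable $\mathscr{X}$, drop $(m_4)$ via Theorem~\ref{Theorem4.2}, apply the Lagrange multiplier rule with limiting normals/subdifferentials, and then unpack the resulting inclusion componentwise using the nonsmooth chain rule for $f_1$ and the explicit gradients of the affine constraints $\delta_m^i$ and $h_m^{ij}$. The paper organizes the constraints exactly as you describe (functional inequalities/equalities versus set-membership), obtains the aggregate condition \eqref{discrnec}, and then reads off \eqref{necd1}--\eqref{subgrs} by matching components; your identification of $p_m^i$ as the multiplier for the coupling $\Delta_m^i=(x_m^{i+1}-x_m^i)/h_m$, of $\omega_m$ for the zero-mean constraint, and of $\psi_m^i,\beta_m^i$ for $(m_6),(m_7)$ is exactly what happens there.

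One caveat: your justification for $\lambda_m>0$ is not quite right as stated. The quadratic $\kappa_m$-penalty is part of the cost, so if $\lambda_m=0$ it simply disappears from the multiplier rule and cannot by itself ``prevent the degenerate situation''. Strict positivity of $\lambda_m$ must come from a constraint qualification (e.g.\ the existence of a feasible point in the interior of the active inequality constraints, which here is guaranteed since the discrete approximation $(x_m,u_m,\rho_m)$ from Theorem~\ref{Theorem1} is feasible and, for $m$ large, strictly satisfies $(m_4)$). The paper is admittedly terse on this point as well, simply asserting $\lambda_m>0$ as part of the output of ``classical results in mathematical programming'', so you are not missing anything that the paper makes explicit; but if you want a self-contained argument you should appeal to a qualification condition rather than to the penalty.
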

\begin{proof}
We begin by arranging in two different categories the constraints in problem ($P_m$). The variable $\mathscr{X}$ fulfils the following requirements:
\begin{align}
\label{c1} \Phi(\mathscr{X})&:=|x_m^0-\bar{x}(0)|^2+ \sum_{i=0}^{2^m-1}\int_{I_m^i} |(\Delta_m^i,w_m^i)-(\dot{\bar{x}}(t),\bar{u}(t))|^2\dd t 
   - \frac{\bar{\varepsilon}}{2}\le 0\\
\label{c2a} \delta_m^i(\mathscr{X})&:= x_m^{i+1}-x_m^i-h_m\Delta_m^i =0,\quad i=1,\ldots ,2^m-1\\
\label{c2b}\delta_m^0(\mathscr{X})&:=x_m^1-x_m^{2^m}-h_m\Delta_m^0=0\\
\label{c3} h_m^{ij}(\mathscr{X})&:=\langle x_\ast^j,x_m^i\rangle - c_m^{ij}\le 0,\quad i=1,\ldots ,2^m,\; j=1,\ldots,\sigma
\end{align}
together with
\begin{align}
\label{g2} \mathscr{X}\in\Xi_m^i&:= \left\{ \mathscr{X} : \Delta_m^i = \Gamma_m^i (x_m^i,w_m^i,\rho_m^i)\right\},\quad i=0,\ldots ,2^m-1\\
\label{g3} \mathscr{X}\in \mathcal{B}_m^i&:= \{ \mathscr{X} : |\rho_m^i|\le 1\},\quad i=0,\ldots ,2^m-1\\
\label{g4} \mathscr{X}\in\Omega_m^i&:= \{ \mathscr{X} : w_m^i\in U\},\quad i=0,\ldots ,2^m-1\\
\label{g5} \mathscr{X} \in \Omega_m&:= \{ \mathscr{X} : \sum_{i=0}^{2^m-1} w_m^i =0\}.
\end{align}
Recalling Theorem \ref{Theorem4.2}, the constraint \eqref{c1} is eventually inactive  and therefore will be neglected 
in the computations of necessary conditions. Applying classical results in mathematical programming we
obtain a set of necessary conditions for ($P_m$) that read as follows.

\medbreak
\noindent\emph{There exist $\lambda_m>0$, $\omega_m \in \mathbb{R}^d$, $\psi_m^i\in \mathbb{R}^d$,
$\xi_m^i= (\xi_m^{i1},\ldots,\xi_m^{i\sigma})\in \mathbb{R}_+^\sigma$, $p_m^i \in X$, and 
$\mathscr{X}_i^\ast\in X^{2^m}\times X^{2^m}\times X^{2^m}\times X^{2^m}$, $i=0,\ldots,2^m$ such that
\begin{equation}\label{disc1}
\mathscr{X}_i^\ast \in \NN_{\Xi_m^i} (\mathscr{\bar{X}}) + \hat{\NN}_{\mathcal{B}_m^i}(\bar{\mathscr{X}})+
            \hat{\NN}_{\Omega_m^i}(\bar{\mathscr{X}}), \quad i=0,\ldots, 2^m-1 
\end{equation}
\begin{equation}\label{disc2}
\mathscr{X}_{2^m}^\ast \in \hat{\NN}_{\Omega_m}(\bar{\mathscr{X}}),
\end{equation}
where
\begin{equation*}
\begin{split}
\hat{\NN}_{\mathcal{B}_m^i}(\mathscr{X}) &= (0,\ldots,0,\beta_m^{i},0,\ldots,0),\;
         \text{ with }\; \beta_m^{i}\in \NN_{\mathcal{B}_m^i}(\bar{\rho}_m^i)\\  
\hat{\NN}_{\Omega_m^i}(\mathscr{X})&=(0,\ldots,0,\psi_m^{i},0,\ldots,0),\;
         \text{ with }\; \psi_m^{i}\in \NN_{U}(\bar{w}_m^i)\\
\hat{\NN}_{\Omega_m}(\mathscr{X})&=(0,\ldots,0,\underline{\omega}_m,0,\ldots,0),\;
         \text{ with }\; \underline{\omega}_m=(\omega_m,\ldots,\omega_m)\in \mathbb{R}^d,
\end{split}
\end{equation*}
together with
\begin{equation}\label{discrnec}
\begin{split}
-\sum_{i=0}^{2^m} \mathscr{X}_i^\ast &\in \lambda_m \partial J_m(\bar{\mathscr{X}}) 
        +\sum_{i=1}^{2^m}\sum_{i=1}^\sigma \xi_m^{ij} \nabla h_m^{ij} (\bar{\mathscr{X}}) 
        +\sum_{i=0}^{2^m-1} \nabla \delta_m^i (\bar{\mathscr{X}})^\ast p_m^i,\\
&\quad \text{ where }\; \xi_m^{ij}\, h_m^{ij}(\bar{\mathscr{X}}) =0,\quad i=1,\ldots,2^m,\; j=1,\ldots,\sigma.
\end{split}
\end{equation}}

\medbreak
We now write componentwise the above expression, making first explicit the (sub)gradients.
Invoking the nonsmooth chain rule (see, e.g., Theorem 10.6 and Example 10.8 in \cite{RW}) we obtain
\begin{equation*}
\begin{split}
\partial J_m(\bar{\mathscr{X}})& \subset \bigg( h_m D_x g(t_m^0,\bar{x}_m^0,\bar{w}_m^0)^\ast 
         \partial f_1 \big(g(t_m^0,\bar{x}_m^0,\bar{w}_m^0)- \bar{\Delta}_m^0\big)+\kappa_m (\bar{x}(0)-\bar{x}_m^0),\\
&\quad\; \big[h_m D_x g(t_m^i,\bar{x}_m^i,\bar{w}_m^i)^\ast 
         \partial f_1 \big(g(t_m^i,\bar{x}_m^i,\bar{w}_m^i)-\bar{\Delta}_m^i\big)\big]_{i=1},\ldots ,2^m;\\
&\quad\; \Big[h_m\Big(D_w g(t_m^i,\bar{x}_m^i,\bar{w}_m^i)^\ast 
         \partial f_1 \big(g(t_m^i,\bar{x}_m^i,\bar{w}_m^i) -\bar{\Delta}_m^i\big)\\
&\qquad\qquad\qquad - D_w f_2(t_m^i,\bar{w}_m^i)+ \kappa_m\int_{I_m^i}(\bar{u}(t)-\bar{w}_m^i)\dd t\Big)\Big]_{i=0,\ldots ,2^m-1};\\
&\quad\; 0_{X^{2^m-1}}; \Big[-h_m \partial  f_1 \big(g(t_m^i,\bar{x}_m^i,\bar{w}_m^i)-\bar{\Delta}_m^i\big)+\kappa_m
       \int_{I_m^i} (\dot{\bar{x}}(t)-\bar{\Delta}_m^i)\dd t\Big]_{i=0,\ldots,2^m-1}\bigg).
\end{split}
\end{equation*}
Moreover
\begin{equation*}
\big(\nabla h_m^{ij} (\bar{\mathscr{X}})\big)_{X^i}=x_\ast^j,\quad i=1,\ldots ,2^m, j=1,\ldots ,\sigma, 
\end{equation*}
\begin{equation*}
\Big( \sum_{\ell=0}^{2^m-1}\nabla\delta_m^\ell(\bar{\mathscr{X}})^\ast p_m^\ell \Big)_{X^i}=
\begin{cases}
p_m^{i-1}-p_m^i & \; \text{ for }\; 1 \le i \le 2^m-1\\
p_m^{2^m-1}-p_m^0  & \; \text{ for }\;  i =2^m,
\end{cases}
\end{equation*}
\begin{equation*}
\Big( \sum_{\ell=0}^{2^m-1}\nabla\delta_m^\ell(\bar{\mathscr{X}})^\ast p_m^\ell \Big)_{\Delta^i}= -h_m p_m^i,\quad i=0,\ldots , 2^m-1.
\end{equation*}
Thus we obtain from \eqref{discrnec}, for a suitable $\eta_m^\ell\in\partial f_1 
(g(t_m^\ell,\bar{x}_m^\ell,\bar{w}_m^\ell)-\bar{\Delta}_m^\ell)$,
\begin{equation*}
-\sum_{i=0}^{2^m} x_i^{\ast\ell}=-x_\ell^{\ast\ell}=\lambda_m h_m D_x g(t_m^\ell,\bar{x}_m^\ell,\bar{w}_m^\ell)^\ast\eta_m^\ell + 
        \sum_{j=1}^\sigma \xi_m^{\ell j}x_\ast^j-p_m^\ell + p_m^{\ell-1},
  \quad \ell=1,\ldots ,2^m,
\end{equation*}
where we have set $p^{2^m}=p_m^0$. For $\ell=0,\ldots ,2^m-1$ we have moreover
\begin{equation*}
\begin{split}
-\sum_{i=0}^{2^m} w_i^{\ast\ell }&=-w_\ell^{\ast\ell} = \lambda_m h_m \big(D_w g(t_m^\ell,\bar{x}_m^\ell,\bar{w}_m^\ell)^\ast \eta_m^\ell 
         + u_m^\ell - \bar{w}_m^\ell + D_w f_2(t_m^\ell,\bar{w}_m^\ell) \big),
\\
-\sum_{i=0}^{2^m}\rho_i^{\ast\ell} & = -\rho_\ell^{\ast\ell} =0,\\
-\Delta_\ell^{\ast\ell} &= -h_m p_m^\ell-\lambda_m\Big( h_m(\eta_m^\ell + \bar{\Delta}_m^\ell) -\int_{I_m^i}\dot{\bar{x}}(t)\dd t  \Big).
\end{split}
\end{equation*}
Observe now that \eqref{disc1} and \eqref{disc2} can be rewritten as
\begin{equation}\label{disc3}
(x_{i+1}^{\ast i+1}, w_i^{\ast i}-\psi_m^i-\omega_m,\rho_i^{\ast i}-\beta_m^i,\Delta_i^{\ast i})
  \in \NN_{\mathrm{graph}(\Gamma^i)}(\bar{\mathscr{X}}^i),\quad i=0,\ldots ,2^m-1
\end{equation}
for suitable vectors $\psi_m^i\in \NN_U(\bar{w}^i)$ and $\beta_m^i\in \NN_{B_X}(\bar{\rho}_m^i)$.
Dividing by $h_m$ the left hand side of \eqref{disc3} and taking into account the above list of necessary conditions, one arrives to \eqref{necd1}
and \eqref{necd2}. The proof is concluded.
\end{proof}
In the next result we obtain more explicit necessary conditions by computing the normal cone in the right hand side of \eqref{necd2}.
\begin{thm}\label{Theorem6.2}
Let $\bar{\mathscr{X}}=(\bar{x}_m,\bar{w}_m,\bar{\rho}_m,\bar{\Delta}_m)$ be an optimal process for ($P_m$). 
Then there exist $\lambda_m \in\mathbb{R}$, $\omega_m \in \mathbb{R}^d$, $\psi_m^i\in \mathbb{R}^d$, $\beta_m^i,\, \eta_m^i\in X$, 
$\xi_m^i= (\xi_m^{i1},\ldots,\xi_m^{i\sigma})\in \mathbb{R}_+^\sigma$, $p_m^i \in X$, and 
$\mathscr{X}_m^\ast\in X^{2^m}\times X^{2^m}\times X^{2^m}\times X^{2^m}$, $i=0,\ldots,2^m$ such that \eqref{necd1} and \eqref{subgrs} hold, together with
\begin{equation}\label{lambda}
\lambda_m >0 
\end{equation}
and, for, $i=0,\ldots,2^m-1$,
\begin{align}
\begin{split}
\label{px} \frac{p_m^{i+1}-p_m^i}{h_m}&=-D_x g(t_m^i,\bar{x}_m^i,\bar{w}_m^i)^\ast (p_m^i-\lambda_m\vartheta_m^{x,i})
+\sum_{j\in I_m^i}\frac{\xi_m^{ij}}{h_m} x_\ast^j+\lambda_m\kappa_m^i (\bar{x}_m^0-\bar{x}(0)),
\end{split}
\end{align}
where $I_m^i=\left\{ j=1,\ldots,\sigma : \langle x_\ast^j,x_m^i\rangle = c_m^{ij}\right\}$,
     $\kappa_m^i$ is as in \eqref{7.2.5}, and we have set
\begin{equation}
\label{defthx} \vartheta_m^{x,i}:= \strokedint_{I_m^i} \dot{\bar{x}}(t)\dd t - \frac{\bar{x}_{m}^{i+1}-\bar{x}_m^i}{h_m},
\end{equation}
\begin{align}
\begin{split}
\label{pw} \frac{\psi_m^{i}}{h_m}&= \lambda_m \Big( \kappa_m^i(u_m^i-\bar{w}_m^i)-D_w f_2(t_m^i,\bar{w}_m^i)
     -D_w g(t_m^i,\bar{x}_m^i,\bar{w}_m^i)^\ast\vartheta_m^{x,i}\Big)\\
   &\qquad\qquad\qquad -   \frac{\omega_m}{h_m} + D_w g(t_m^i,\bar{x}_m^i,\bar{w}_m^i)^\ast p_m^i   \in \NN_U(\bar{w}_m^i)
\end{split}
\\
\label{pr} \frac{\beta_m^i}{h_m}&= \vartheta_m^{r,i} \left(p_m^i + \lambda_m (\eta_m^i-\vartheta_m^{x,i})\right)\in \NN_{B_X}(\bar{\rho}_m^i)
\end{align}
where we have set
\begin{align}
\label{defthr} \vartheta_m^{r,i}&:= \strokedint_{I_m^i} r_m(t)\dd t.
\end{align}
Moreover, in \eqref{px} we have
\begin{equation}\label{p2mx}
p_m^{2^m}=p_m^0. 
\end{equation}
\end{thm}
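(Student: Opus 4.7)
The plan is to start from the already-established Theorem \ref{Theorem6.1}, whose only non-explicit ingredient is the normal cone $\NN_{\mathrm{graph}(\Gamma_m^i)}(\bar{\mathscr{X}})$ appearing in \eqref{necd2}, and simply compute this cone. Since $g$ is smooth in $(x,w)$ by $(H_g)$ and $\Gamma_m^i$ depends linearly on $\rho$, the map
\[
\Gamma_m^i\colon X\times U\times B_X\to X,\qquad \Gamma_m^i(x,w,\rho)=-\strokedint_{I_m^i}(g(s,\bar{x},\bar{u})-\dot{\bar x})\dd s+g(t_m^i,x,w)+\rho\,\vartheta_m^{r,i},
\]
is $\mathcal{C}^1$ in $(x,w,\rho)$, so its graph is a smooth manifold. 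By the classical formula for the normal cone to the graph of a smooth map (see, e.g., \cite[Ex.~6.7]{RW}), a vector $(A,B,C,D)$ lies in $\NN_{\mathrm{graph}(\Gamma_m^i)}(\bar{\mathscr{X}})$ if and only if there exists $q\in X$ such that $D=q$ and
\[
A=-D_x\Gamma_m^i(\bar{x}_m^i,\bar{w}_m^i,\bar{\rho}_m^i)^{\!*}q,\quad B=-D_w\Gamma_m^i(\bar{x}_m^i,\bar{w}_m^i,\bar{\rho}_m^i)^{\!*}q,\quad C=-D_\rho\Gamma_m^i(\bar{x}_m^i,\bar{w}_m^i,\bar{\rho}_m^i)^{\!*}q.
\]
The partial derivatives read $D_x\Gamma_m^i=D_xg(t_m^i,\bar{x}_m^i,\bar{w}_m^i)$, $D_w\Gamma_m^i=D_wg(t_m^i,\bar{x}_m^i,\bar{w}_m^i)$, and $D_\rho\Gamma_m^i=\vartheta_m^{r,i}\,\mathrm{Id}_X$, with $\vartheta_m^{r,i}$ as in \eqref{defthr}.

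Next I would read off the four components of the vector in \eqref{necd2}. The fourth component identifies the dual variable,
\[
q_m^i\;=\;p_m^i+\lambda_m\bigl(\eta_m^i-\vartheta_m^{x,i}\bigr),
\]
with $\vartheta_m^{x,i}$ as in \eqref{defthx}. Substituting $q_m^i$ into the first coordinate equation, collecting the two $D_xg^{\!*}$-terms (the one carrying $\eta_m^i$ cancels against the analogous term in $q_m^i$, leaving only $-D_xg^{\!*}(p_m^i-\lambda_m\vartheta_m^{x,i})$), solving for $(p_m^{i+1}-p_m^i)/h_m$ and using complementarity \eqref{necd1} to restrict the sum over $j$ to the active index set $I_m^i$, yields the adjoint equation \eqref{px}. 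The second coordinate, after moving the $\omega_m/h_m$ and $\psi_m^i/h_m$ terms to the other side and recognising the sign cancellation with the $\eta_m^i$-contribution from $q_m^i$, gives the weak maximality \eqref{pw}, with the membership $\psi_m^i\in\NN_U(\bar{w}_m^i)$ inherited from \eqref{subgrs}. The third coordinate is just $-\beta_m^i/h_m=-\vartheta_m^{r,i}q_m^i$, which is \eqref{pr}. The transversality \eqref{p2mx} was already recorded in \eqref{per_p} of Theorem \ref{Theorem6.1}, and \eqref{lambda} is restated from the fact that $\lambda_m>0$ was produced by Theorem \ref{Theorem6.1}; positivity there was guaranteed because the presence of the strictly convex penalty $\tfrac{\kappa_m}{2}(\cdots)$ in $J_m$ makes the objective strongly responsive to perturbations, preventing $\lambda_m=0$.

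The calculations are essentially bookkeeping once the normal-cone formula is in place, so no step is conceptually subtle; the main nuisance is tracking signs, the factor $1/h_m$, and the $\kappa_m^i$ that survives only at $i=0$ (it comes from the initial-point term $|z_m^0-\bar{x}(0)|^2$ in the objective and in the \emph{a posteriori} normal direction $q_m^i$). The single genuinely non-mechanical point would be rigorously deriving the smooth-graph formula in our setting: this is where the $\mathcal{C}^1$ assumption on $g$ in $(H_g)$ and the linearity of $\Gamma_m^i$ in $\rho$ are essential, since otherwise the Mordukhovich normal cone to the graph would not reduce to the orthogonal complement of the tangent space.
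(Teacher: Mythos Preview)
Your proposal is correct and follows essentially the same route as the paper: invoke Theorem~\ref{Theorem6.1}, compute the normal cone to the graph of the $\mathcal{C}^1$ map $\Gamma_m^i$ via the standard smooth-manifold formula, identify the dual vector $q_m^i=p_m^i+\lambda_m(\eta_m^i-\vartheta_m^{x,i})$ from the $\Delta$-component of \eqref{necd2}, and then read off \eqref{px}, \eqref{pw}, \eqref{pr} componentwise, noting the cancellation of the $\eta_m^i$-terms. Your aside that $\kappa_m^i$ also appears ``in the \emph{a posteriori} normal direction $q_m^i$'' is a slip (the $\Delta$-component carries $\kappa_m$, not $\kappa_m^i$), but this is cosmetic and does not affect the argument.
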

\begin{proof}
The computation of the normal cone to the graph of $\Gamma_m^i$, recalling \eqref{necd2}, yields for $i=0,\ldots,2^m-1$
\begin{equation*}
\begin{split}
\left(
\begin{array}{l}
\kappa_m^i(\bar{x}(0)-\bar{x}^0)-\lambda_m D_x g(t_m^i,\bar{x}_m^i,\bar{w}_m^i)^\ast\eta_m^i
      -\sum_{j\in I_m^i}\frac{\xi_m^{ij}}{h_m} x_\ast^j + \frac{p_m^{i+1}-p_m^i}{h_m}\\
  -\lambda_m\big( D_w g(t_m^i,\bar{x}_m^i,\bar{w}_m^i)^\ast\eta_m^i +\kappa_m(\bar{w}_m^i- u_m^i) + D_w f_2(t_m^i,\bar{w}_m^i)\big) - 
           \frac{\omega_m}{h_m} - \frac{\psi_m^i}{h_m}\\
-\frac{\beta_m^i}{h_m} 
\end{array}
\right)\\
= -\left(
\begin{array}{c}
D_x g(t_m^i,\bar{x}_m^i,\bar{w}_m^i)^\ast\\
D_w g(t_m^i,\bar{x}_m^i,\bar{w}_m^i)^\ast\\
\vartheta_m^{r,i}\mathbb{I} 
\end{array}
\right)
\begin{array}{c}
\left(p_m^i + \lambda_m (\eta_m^i - \vartheta_m^{x,i})
  \right)\\\vspace{7truept}\\
\end{array}
\end{split}
\end{equation*}
where $\mathbb{I}$ denotes the identity matrix in $X$.
By computing the above product and recalling the terminal condition from Theorem \ref{Theorem6.1}, the assertions follow.
\end{proof}
\section{Proof of Theorem \ref{nec}: passing to the limit}\label{sec:main-proof}
We conclude the proof of Theorem \ref{nec} by performing a limiting procedure along the necessary conditions for problems ($P_m$) that
were proved in Theorem \ref{Theorem6.2}.
\begin{proof}[Proof of Theorem \ref{nec}.]
Referring to the statement of Theorem \ref{Theorem6.2}, we set
\begin{itemize}
\item $p_m(t)= p_m^i + (t-t_m^i) (p_m^{i+1}-p_m^i)$, for $t\in [t_m^i,t_m^{i+1})$, $i=0,\ldots,2^m -1$
\item $p_m (T) = p_m(0)$
\item $\xi_m^j(t) = \sum_{i=0}^{2^m-1} \frac{\xi_m^{ij}}{h_m} \mathds{1}_{[t_m^i,t_m^{i+1})}(t)$, $t\in [0,T)$, $j=1,\ldots,\sigma$
\item $\psi_m(t) = \sum_{i=0}^{2^m-1} \frac{\psi_m^i}{h_m} \mathds{1}_{[t_m^i,t_m^{i+1})}(t)$, $t\in [0,T)$
\item $\eta_m(t) = \sum_{i=0}^{2^m-1} \eta_m^i \mathds{1}_{[t_m^i,t_m^{i+1})}(t)$, $t\in [0,T)$
\item $\beta_m (t) = \sum_{i=1}^{2^m-1}\frac{\beta_m^i}{h_m} \mathds{1}_{[t_m^i,t_m^{i+1})}(t)$, $t\in [0,T)$
\item $\vartheta_m(t) = \sum_{i=0}^{2^m-1}\frac{\vartheta_m^i}{h_m}\mathds{1}_{[t_m^i,t_m^{i+1})}(t)$, $t\in [0,T)$, where 
$\vartheta_m^i = (\vartheta_m^{x,i},\vartheta_m^{r,i})$
\item $\omega_m^\ast = \frac{\omega_m}{h_m}$.
\end{itemize}
Observe first that, by \eqref{defthx}, \eqref{defthr}, 
 and \eqref{menouno}, $\vartheta_m\to 0$ in $L^1(0,T;X^2)$  and $\kappa_m (\bar{x}_m^0-\bar{x}(0))\to 0$. 
In particular, there
exists $\Lambda\in\mathbb{R}$ such that, for every $m$, 
\begin{equation}\label{Lambda}
\kappa_m (\bar{x}_m^0-\bar{x}(0))+\sum_{i=0}^{2^m-1} \| \vartheta_m^i\| \le \Lambda.
\end{equation}
Since all conditions appearing in the statement of Theorem \ref{Theorem6.2} are positively homogeneous of degree one, thanks to \eqref{lambda}
we assume without loss of generality that
\begin{equation}\label{norm}
\lambda_m + |p_m(T)| + \sum_{i=0}^{2^m-1} \big| \sum_{j=1}^\sigma \xi_m^{ij} x_\ast^j\big| + \frac{|\omega_m|}{h_m} 
      + \sum_{i=1}^{2^m-1}| \psi_m^i| =1, 
\end{equation}
i.e.,
\begin{equation*}
\lambda_m +  |p_m(T)| +\big\| \sum_{j=1}^\sigma \xi_m^{j} x_\ast^j \big\|_{L^1(0,T;X)} + |\omega_m^\ast| + \| \psi_m\|_{L^1(0,T;X)}=1.
\end{equation*}
By compactness, there exists a subsequence, that we do not relabel, and there exist $\lambda\ge 0$, $\omega\in X$, 
$d\xi^j\in \mathcal{C}_+^{\ast}([0,T];X)$, $i=1,\ldots,\sigma$, such that
\begin{equation*}
\begin{split}
\lambda_m &\to \lambda\\
\omega_m^\ast&\to\omega\\
\sum_{j=1}^\sigma x_\ast^j\dd \xi_m^j&\to\sum_{j=1}^\sigma x_\ast^j\dd \xi^j\qquad\text{in }\; \mathcal{C}^\ast([0,T];X)
\end{split}
\end{equation*}
Observe that, thanks to PLICQ and to the complementarity conditions \eqref{necd1} we have also
\begin{equation*}
d\xi_m^j\to d\xi^j \qquad\text{in }\; \mathcal{C}^\ast([0,T];X).
\end{equation*}
The main point of the proof is showing that the sequence $\{ p_m : m\in\mathbb{N}\}$ is uniformly bounded in $W^{1,1}([0,T];X)$, so that a subsequence
of $\{ p_m\}$ will converge  weakly to a $BV$ function $p$. 
This fact, in turn, will imply that the further sequences $\{\psi_m\}$ and $\{\beta_m\}$
will converge (strongly) in the appropriate spaces, thanks to \eqref{pw} and \eqref{pr}. 
The convergence argument will be divided into three steps.

\smallskip

\noindent\textbf{Step 1}. The sequence $\{ p_m : m\in \mathbb{N}\}$ is bounded in $L^\infty ([0,T];X)$.\\
\textit{Proof of Step 1.} We start by rewriting \eqref{px} 
as 
\begin{equation}\label{pxx}
\begin{split}
p_m^{i+1} &= \big(\mathbb{I} - h_m D_x g(t_m^i,\bar{x}_m^i,\bar{w}_m^i)^\ast\big)p_m^i + \sum_{j=1}^\sigma \xi_m^{ij}\, x_\ast^j 
+\lambda_m h_m D_x g(t_m^i,\bar{x}_m^i,\bar{w}_m^i)^\ast\vartheta_m^{x,i}\\
      &\qquad +\lambda_m h_m\kappa_m^i(\bar{x}_m^0-\bar{x}(0)),\qquad i=0,\ldots,2^m-1,
\end{split}
\end{equation}
where we recall that $p_m^{2^m}=p_m^0$. Set $\gamma_m^i= | p_m^i|$, $i=0,\ldots,2^m-1$, $m\in\mathbb{N}$. By \eqref{p2mx} and \eqref{norm},
\begin{equation*}
\gamma_m^0\le 1\quad\forall m\in\mathbb{N}. 
\end{equation*}
By \eqref{pxx} we obtain 
\[
\gamma_m^1\leq \left( 1 + h_m L'\right) + \lambda_m h_m L' | \vartheta_m^{x,1}|
+\big| \sum_{j=1}^\sigma \xi_m^{1j}\, x_\ast^j \big|+\lambda_m h_m\Lambda=:d_m^1,
\]
and, for $i=2,\ldots,2^m-1$,
\[
\gamma_m^i\leq \left( 1 + h_m L'\right) \gamma_m^{i-1} + \lambda_m h_m L' | \vartheta_m^{x,i}|
  +\big| \sum_{j=1}^\sigma \xi_m^{ij}\, x_\ast^j \big|=:\left( 1 + h_m L'\right) \gamma_m^{i-1} +d_m^i.
\]
By induction, 
we obtain from the above conditions that, for each $k=1,\ldots,2^m-1$
\begin{equation*}
\gamma_m^k \le \sum_{i=1}^k d_m^i ( 1 + h_m L')^{k-i} = \sum_{\ell =0}^{k-1} d_m ^{k-\ell} (1 + h_m L')^\ell \le e^{TL'} \sum_{i=1}^k d_m^i, 
\end{equation*}
Therefore, for each $k=1,\ldots,2^m-1$, recalling \eqref{Lambda},
\begin{equation*}
\gamma_m^k\le e^{TL'} \left[ \lambda_m h_m (L'+1) \Lambda + \sum_{i=1}^{2^m} \big| \sum_{j=1}^\sigma \xi_m^{ij}\, x_\ast^j\big|\right].
\end{equation*}
Therefore, the sequence $\{\gamma_m^k : k=0,\ldots,2^m\}$ is 
bounded uniformly w.r.t.~$m$, and the proof of Step 1 is concluded.

\smallskip

\noindent \textbf{Step 2}. The sequence $\{ \dot{p}_m : m\in\mathbb{N}\}$ is bounded in $L^1(0,T;X)$ uniformly w.r.t.~$m$.\\
\textit{Proof of Step 2.} By \eqref{px} and Step 1,
\begin{equation*}
\| \dot{p}_m\|_{L^1}\le L' \lambda_m \Lambda + L' c + \big\| \sum_{j=1}^\sigma \xi_m^{j}\, x_\ast^j \big\|_{L^1(0,T;X)},
\end{equation*}
for a suitable constant $c$. Recalling \eqref{norm}, the claim follows.

\bigbreak

Up to taking another subsequence, by standard compactness results we can now assume that
\begin{equation*}
\begin{split}
p_m(t) &\to p(t)\quad\text{for all $t\in [0,T]$}\\
\dot{p}_m\dd t &\overset{\ast}{\rightharpoonup} \dd p\quad\text{in $\mathcal{C}^\ast([0,T];X)$},
\end{split}
\end{equation*}
for a suitable $BV$ function $p\colon[0,T]\to X$.

\smallskip

\noindent \textbf{Step 3}. The sequence $\{\psi_m\}$ converges strongly in $L^1(0,T;X)$ to a function $\psi$ that satisfies the weak maximality condition.
Furthermore, $\beta_m\to 0$ strongly in $L^1(0,T;X)$.\\
\textit{Proof of Step 3}. We obtain from \eqref{pw} 
that, for all $m$ and all $i=0,\ldots,2^m-1$, 
\begin{equation}\label{pw2}
\begin{split}
\frac{\psi_m^i}{h_m} &= -\omega_m^\ast + D_w g (t_m^i,\bar{x}_m^i,\bar{w}_m^i)^\ast p_m^i\\
&\quad + \lambda_m \big( \kappa_m(u_m^i-\bar{w}_m^i) -D_w g (t_m^i,\bar{x}_m^i,\bar{w}_m^i)^\ast\vartheta_m^{x,i} 
    - D_w f_2(t_m^i,\bar{w}_m^i)  \big)\in \NN_U(\bar{w}_m^i).
\end{split}
\end{equation}
We recall that, by Theorem \eqref{menouno}, $\kappa_m(\bar{w}_m-\bar{u})\to 0$ in $L^1(0,T;X)$. 
Therefore, recalling also Lemma \ref{Lemma1} and the above discussion of convergence of
$\vartheta_m$, $\omega_m^\ast$, $p_m$, the right-hand side of \eqref{pw2} converges strongly in $L^1(0,T;X)$ to
\begin{equation*}
-\omega + D_w g(t,\bar{x}(t),\bar{w}(t))^\ast p(t) -\lambda   D_w f_2(t,\bar{u}(t))=:\psi(t).
\end{equation*}
By the graph closedness of the normal cone $\NN_U$ we obtain also that $\psi(t)\in \NN_U(\bar{u}(t))$ a.e., hence concluding the proof of Step 3.

\smallskip

The above arguments also allow to pass to the limit along \eqref{px} and \eqref{p2mx} in the suitable topologies
and obtain the adjoint equation and the transversality condition.

We are therefore left with proving the nontriviality and the support conditions. To prove the first one, suppose by 
contradiction that both $\lambda$
and $p$ vanish. Then, by the adjoint equation, $\sum_{j=1}^\sigma x_\ast^j \dd \xi^j=0$. Therefore, by the weak maximality condition,
$\psi (t)\equiv -\omega$ is constant. Assume by contradiction that $\omega\neq 0$, so that necessarily $\bar{u}(t)\in\partial U$
for a.e.~$t$. Then, since $0\in \mathrm{int}\, U$,
\begin{equation*}
\langle\omega , \bar{u}(t)\rangle < \langle \omega , 0\rangle =0\quad\text{for a.e.~$t\in [0,T]$.} 
\end{equation*}
By integrating the above inequality we contradict the assumption that $\int_0^T\bar{u}(t)\dd t =0$. 
The above argument, therefore, shows that for all
$m$ large anough \eqref{norm} must be violated, hence concluding the proof of the nontriviality condition.

To prove the support condition, fix $j=1,\ldots,\sigma$ and set 
\begin{equation*}
E_j:= \{ t : \langle x_\ast^j,\bar{x}(t)\rangle < c^j\}. 
\end{equation*}
Assume that $E_j\neq\emptyset$ and let $K\subset E_j$ be compact. For all $m$ large enough, $\langle x_\ast^j,\bar{x}_m(t)\rangle < c_m^j(t)$
for all $t\in K$. By \eqref{px}, $\xi_m^j(t)=0$ on $K$, so that the support condition is proved.

The proof of Theorem \ref{nec} is concluded.
\end{proof}

\end{document}